\newtheorem{dfn}{Definition}
\newtheorem{prop}[dfn]{Proposition}
\newtheorem{theo}[dfn]{Theorem}
\newtheorem{cor}[dfn]{Corollary}
\newtheorem{ex}[dfn]{Example}
\newtheorem{lem}[dfn]{Lemma}
\newcommand{\RR}{\mathbb{R}}
\newcommand{\CC}{\mathbb{C}}
\newcommand{\oo}{\,\mbox{-}\,}
\newcommand{\com}{\circ}
\newcommand{\setsep}{\;|\;}
\newcommand{\src}{\mathord{\mathrm{s}}}
\newcommand{\trg}{\mathord{\mathrm{t}}}
\newcommand{\mlt}{\mathord{\mathrm{mlt}}}
\newcommand{\inv}{\mathord{\mathrm{inv}}}
\newcommand{\uni}{\mathord{\mathrm{uni}}}
\newcommand{\id}{\mathord{\mathrm{id}}}
\newcommand{\pr}{\mathord{\mathrm{pr}}}
\newcommand{\Mon}{\mathord{\mathrm{Mon}}}
\newcommand{\I}{\mathord{\mathrm{I}}}
\newcommand{\K}{\mathord{\mathrm{K}}}
\newcommand{\Pc}{\mathord{\mathrm{Pc}}}
\newcommand{\fix}{\mathord{\mathrm{fix}}}
\newcommand{\GL}{\mathord{\mathrm{GL}}}
\newcommand{\Aut}{\mathord{\mathrm{Aut}}}
\newcommand{\Inn}{\mathord{\mathrm{Inn}}}
\newcommand{\sat}{\mathord{\mathrm{sat}}}
\newcommand{\C}{\mathord{\mathrm{C}}}
\newcommand{\N}{\mathord{\mathrm{N}}}
\newcommand{\U}{\mathord{\mathrm{U}}}
\newcommand{\Ort}{\mathord{\mathrm{O}}}
\newcommand{\con}{\mathord{\mathrm{c}}}
\newcommand{\tr}{\mathord{\mathrm{tr}}}
\newcommand{\UFr}{\mathord{\mathrm{UFr}}}
\newcommand{\GG}{\mathscr{G}}
\newcommand{\HH}{\mathscr{H}}
\newcommand{\KK}{\mathscr{K}}
\newcommand{\BB}{\mathscr{B}}
\newcommand{\Gpd}{\mathord{\mathsf{Gpd}}}
\newcommand{\GPD}{\mathord{\mathsf{GPD}}}
\newcommand{\LGpd}{\mathord{\mathsf{LGpd}}}
\newcommand{\LGPD}{\mathord{\mathsf{LGPD}}}
\newcommand{\Gpdinfty}{\mathord{\mathsf{Gpd^\infty}}}
\newcommand{\GPDinfty}{\mathord{\mathsf{GPD^\infty}}}
\newcommand{\Grp}{\mathord{\mathsf{Grp}}}
\newcommand{\vV}{\mathsf{V}}
\begin{document}

\title[]%
{Monodromy and faithful representability of Lie groupoids}

\author{Janez Mr\v{c}un}
\address{Department of Mathematics, University of Ljubljana,
         Jadranska 19, 1000 Ljubljana, Slovenia}
\email{janez.mrcun@fmf.uni-lj.si}

\thanks{This work was supported in part by
        the Slovenian Research Agency (ARRS)
       }
\subjclass[2010]{22A22, 58H05}

\begin{abstract}
For any topological groupoid $\GG$ and
any homomorphism $\rho$
from a locally compact Hausdorff topological
group $K$ to $\GG$,
we construct an associated
monodromy group $\Mon(\GG;\rho)$. We prove that
Morita equivalent topological groupoids
have the same monodromy groups. 
We show how the monodromy groups can be used to
test if a Lie groupoid lacks faithful representations.
\end{abstract}

\maketitle

\section{Introduction}

As we know from Peter-Weyl theorem, any compact Lie group
has a faithful finite dimensional representation.
It is a natural and a long-standing question how far can this result
be extended to Lie groupoids. The structure of a Lie groupoid
generalizes that of a Lie group, with
examples including
smooth manifolds, smooth bundles of Lie groups,
action groupoids associated to smooth
actions of Lie groups on smooth manifolds, holonomy groupoids of foliations,
and orbifold atlases.
Among Lie groupoids,
proper Lie groupoids are considered to play the role of
compact Lie groups.
Action groupoids of proper smooth
actions and orbifold atlases are all proper Lie groupoids.
In fact, an orbifold can be equivalently described
as a Morita equivalence class of
proper \'etale Lie groupoids \cite{Moerdijk2002}.

It turns out that a proper Lie groupoid admits
a faithful representation on a vector bundle of finite rank
if, and only if, it is Morita equivalent to the action groupoid
of a smooth action of a compact Lie group on a smooth manifold
(see, for example, \cite{Kalisnik,Trentinaglia}).
Effective orbifolds
are all faithfully representable (\cite{Satake},
see also \cite{HenriquesMetzler,MoerdijkMrcun2003}),
but it is still an open question whether all non-effective orbifolds
are faithfully representable as well.
On the other hand, an example of a proper regular (non-\'etale) Lie groupoid
which is not faithfully representable was found (\cite{LuckOliver},
see also \cite{Trentinaglia}).
In \cite{JelencMrcun},
an obstruction to faithful representability of proper regular Lie groupoids was given
and used to construct a large class of proper regular Lie groupoids
which are not faithfully representable. 
This is based on the observation that the ineffective isotropy
arrows $\K(\GG)$ of a proper regular Lie groupoid $\GG$ form a locally trivial
bundle of compact Lie groups over the manifold of objects of $\GG$,
and on an explicit computation
of an associated long exact sequence of homotopy groups
in which the obstruction can be found in 
the image of the boundary (monodromy) map
$$ \partial:\pi_1(\GG;x) \to \pi_0(\Aut(\K_x(\GG))),$$
for an object $x$ of $\GG$.
In this paper we generalize
this obstruction to general Lie groupoids
and construct examples of proper non-regular Lie groupoids which are not
faithfully representable.

Let $K$ be a locally compact topological group.
A homomorphism
of topological groupoids $\rho:K\to\GG$
is determined by an object $\omega(\rho)$ of $\GG$ and
a homomorphism of topological groups from $K$ to the isotropy
group $\I_{\omega(\rho)}(\GG)$ of $\GG$ at $\omega(\rho)$.
The space $\hom(K,\GG)$  of all homomorphisms of topological groupoids
from $K$ to $\GG$ has a natural left $\GG$-action along the map $\omega$
given by conjugation
and a natural right action of the group $\Aut(K)$ of automorphisms of $K$
given by composition.
For any $\rho\in\hom(K,\GG)$ we denote by $\hom(K,\GG;\rho)$ the
space of objects of the path-component
of the action groupoid $\GG\ltimes\hom(K,\GG)$ which includes $\rho$,
and define the monodromy group
$$ \Mon(\GG;\rho) $$
to be the subgroup of $\Aut(K)$ consisting of all automorphisms $\mu\in\Aut(K)$
which satisfy
$\hom(K,\GG;\rho)\com\mu=\hom(K,\GG;\rho)$.
We show that the monodromy group includes the subgroup $\Inn(K)$ of
inner automorphisms of $K$ and that
Morita equivalent topological groupoids have the same
monodromy groups.

If $K$ is a compact Lie group, if $\GG$ is a faithfully representable
Lie groupoid and if $\rho\in\hom(K,\GG)$ is a monomorphism,
we prove
that there exists a faithful character $\chi$ of the Lie group $K$ such that
$$ \chi\com\mu = \chi $$
for any $\mu\in\Mon(\GG;\rho)$ (Theorem \ref{theo-representability}).
This theorem shows that some automorphisms in a monodromy
group of $\GG$ can provide an obstruction to faithful representability of $\GG$
(Example \ref{ex-flat-bundle}).
The theorem also generalizes the obstruction given in \cite{JelencMrcun},
because for a proper regular Lie groupoid $\GG$ with an object $x$, the
image of the monodromy map $\partial:\pi_1(\GG;x) \to \pi_0(\Aut(\K_x(\GG)))$ 
is exactly the group
$$ \pi_0(\Mon(\GG;\id_{\K_x(\GG)})).$$
We show that for a special class of locally trivial bundles of
compact Lie groups, Theorem \ref{theo-representability} provides
not only necessary but also sufficient condition for 
faithful representability (Theorem \ref{theo-bundle-representability}).
Finally, we use the linearization theorem for a proper Lie groupoid $\GG$
to study the action Lie groupoid $\GG\ltimes\hom(K,\GG;\rho)$ and the image
$\Sigma(\GG;\rho)=\omega(\hom(K,\GG;\rho))$ locally. We show how closely the set
$\Sigma(\GG;\rho)$ is related to the fixed point set
of the action of the isotropy group $\I_{\omega(\rho)}(\GG)$ on a local slice.

\section{Groupoids and representations}

For the convenience of the reader and to fix the notations,
we shall use this section to recall some basic
definitions and facts about Lie groupoids
and their representations. Detailed presentation with examples
can be found in
\cite{MoerdijkMrcun2003,MoerdijkMrcun2005,Mackenzie,Kalisnik}.

\subsection{Topological and Lie groupoids}
A groupoid is a small category in which every arrow is invertible. 
In particular, a groupoid $\GG$ consists of a set $\GG_1$ of arrows and a set $\GG_0$
of objects, any arrow $g\in\GG_1$ has its source $\src(g)\in\GG_0$ and its
target $\trg(g)\in\GG_0$, the partial multiplication
$\mlt(g',g)= g'g$ is defined for any pair
of arrows $g',g\in\GG_1$ with $\src(g')=\trg(g)$,
for any object $x\in\GG_0$ there is the identity arrow $\uni(x)=1_x\in\GG_1$
and any arrow $g\in\GG_1$ has its inverse arrow $\inv(g)=g^{-1}\in\GG_1$.
We denote such a groupoid $\GG$ also by $(\GG_1\rightrightarrows\GG_0)$.
The above mentioned maps
$\src,\trg:\GG_1\to\GG_0$,
$\mlt:\GG_1\times^{\src,\trg}_{\GG_0}\GG_1\to\GG_1$,
$\uni:\GG_0\to\GG_1$ and $\inv:\GG_1\to\GG_1$ are called the structure maps
of the groupoid $\GG$. Note that the multiplication map $\mlt$ is defined
on the pull-back with respect to the maps $\src$ and $\trg$.
We say that a groupoid $\GG$ is a groupoid over $\GG_0$ and we often
denote the set $\GG_1$ simply by $\GG$.
We write
$\GG(x,x')$ for the set of arrows of $\GG$ with source $x$ and target $x'$,
while $\GG(x,\oo)=\src^{-1}(\{x\})$ and $\GG(\oo,x')=\trg^{-1}(\{x'\})$,
for any $x,x'\in\GG_0$.
The set $\GG(x,x)=\I_x(\GG)$ is a group, called the
\emph{isotropy group} of $\GG$ at $x$.

A topological groupoid is a groupoid $\GG$, together with a topology
on the set $\GG_1$ and a topology on the set $\GG_0$
such that all the structure maps are continuous.
With continuous functors as homomorphisms, topological groupoids
form a category $\Gpd$.

To make the terminology precise, we will say that a $C^\infty$-space
is a topological space with a $C^\infty$-structure of
constant dimension (given by a maximal $C^\infty$-atlas).
In particular, a $C^\infty$-space is locally Euclidean
of constant dimension without boundary and may not be Hausdorff.
A smooth manifold is a $C^\infty$-space
which is Hausdorff and second-countable.
(If one prefers to allow manifolds and Lie groups 
to have uncountably many components - as some authors do -
one could relax the assumption of second-countability
of smooth manifolds by the condition of paracompactness.)

A $C^\infty$-groupoid is a topological groupoid $\GG$,
together with a $C^\infty$-structure on the space $\GG_1$
and a $C^\infty$-structure on the space $\GG_0$
such that the source map $\src$ is a surjective submersion and
all the structure maps of $\GG$ are smooth.
A Lie groupoid is a $C^\infty$-groupoid $\GG$ in which the space
$\GG_0$ and all the source-fibers $\GG(x,\oo)$, $x\in\GG_0$,
are smooth manifolds.
(In general theory, we do not assume that a Lie groupoid is a smooth
manifold, in order to include many interesting examples.)
The isotropy groups of a Lie groupoid are Lie groups.
The $C^\infty$-groupoids form a category $\Gpdinfty$ in which homomorphisms
are smooth functors. The category $\LGpd$ of Lie groupoids
is the corresponding full subcategory of $\Gpdinfty$.

\begin{ex}\rm
(1) Any smooth
manifold $M$ can be viewed as a Lie groupoid
$(M\rightrightarrows M)$, where both the source map and the target
map are the identity.
Any Lie group $K$ can be viewed as a Lie groupoid 
$(K\rightrightarrows \ast)$ over a one-point set.

(2)
Let $\xi:\vV\to M$ be a smooth complex vector bundle (of finite rank)
over a smooth manifold $M$.
The general linear Lie groupoid on $\vV$
is the Lie groupoid $\GL(\vV)$ over $M$ of which the arrows with
source $x\in M$ and target $x'\in M$ are all linear isomorphisms
from the fiber $\vV_x=\xi^{-1}(x)$ to the fiber $\vV_{x'}=\xi^{-1}(x')$.
In the presence of a Hermitian metric on $\vV$ we may consider
also the unitary Lie groupoid $\U(\vV)$ over $M$, which is a closed subgroupoid
of $\GL(\vV)$ consisting of all linear isometries between the fibers
of $\vV$.

(3) Let $\GG$ be a Lie groupoid and let $U$ be an open subset of $\GG_0$.
Then the restriction $\GG|_U$ is the Lie groupoid over $U$
which consists of all arrows of $\GG$ with their source and their 
target in $U$.
In fact, the restriction $\GG|_U$ is an open subgroupoid of the Lie groupoid
$\GG$.
\end{ex}

\subsection{Actions}
A continuous right action of a topological groupoid $\GG$ on a topological space $P$
along a continuous map $\epsilon:P\to \GG_0$ is a continuous map
$P\times_{\GG_0}^{\epsilon,\trg}\GG_1\to P$, $(p,g)\mapsto pg$, which
satisfies the usual properties of an action.
To such a right $\GG$-action we associate the action groupoid
$P\rtimes\GG=(P\times_{\GG_0}^{\epsilon,\trg}\GG_1\rightrightarrows P)$,
a topological groupoid
in which the source map is the action, the target map is the projection and
the partial multiplication is given by
$(p',g')(p,g)=(p',g'g)$.
We write $p\GG=\{ pg \setsep g\in\GG(\oo,\epsilon(p)) \}\subset P$
for the orbit through the point $p\in P$ and $P/\GG$ for the quotient space
of orbits of the $\GG$-action. We say that a subset $X$ of $P$ is $\GG$-saturated
if it equals its saturation $\sat_\GG (X)$, which is
the union of all of those orbits of the $\GG$-action that intersect $X$.

Similarly we define the notion of a left continuous action of a topological groupoid
$\HH$ on $P$ along a continuous map $\pi:P\to\HH_0$, the associated action groupoid
$\HH\ltimes P$, the orbits $\HH p$, the space of orbits $\HH \backslash P$, the
$\HH$-saturated subsets of $P$ and the saturation $\sat_\HH (Y)$ of a subset $Y$ of $P$.

We can also consider
smooth actions of $C^\infty$-groupoids
on $C^\infty$-spaces along smooth maps.
The associated
action groupoid is in this case a $C^\infty$-groupoid.
The action groupoid associated to a Lie groupoid action
on a smooth manifold is a Lie groupoid.

Note that any topological groupoid $\GG$ acts continuously on $\GG_0$
from the left along the target map and from the right along the source map.
The orbit of $\GG$ through a point $x\in\GG_0$ is the orbit
$\GG x = x\,\GG = \trg(\src^{-1}(\{x\})) = \src(\trg^{-1}(\{x\}))$.
These actions are smooth if $\GG$ is a $C^\infty$-groupoid.
The orbits of a Lie groupoid $\GG$ are immersed submanifolds of $\GG_0$.
There are natural isomorphisms (in the category $\Gpd$, or $\Gpdinfty$)
between $\GG$, $\GG\ltimes\GG_0$ and $\GG_0\rtimes\GG$.

\subsection{Morita maps}
Let $\GG$ and $\HH$ be topological groupoids.
A topological principal $\GG$-bundle over $\HH$ is
a topological space $P$, together with a continuous right action of $\GG$ along
a continuous map $\epsilon$ and a continuous left action of $\HH$ along
a surjective continuous map $\pi$ with local sections such that
$\pi(pg)=\pi(p)$, $\epsilon(hp)=\epsilon(p)$ and
$h(pg)=(hp)g$, for all $p\in P$, $h\in\HH(\pi(p),\oo)$ and $g\in\GG(\oo,\epsilon(p))$,
and such that the map
$P\times_{\GG_0}^{\epsilon,\trg}\GG_1 \to P\times_{\HH_0}^{\pi,\pi} P$,
$(p,g)\mapsto (p,pg)$, is a homeomorphism.

As an important example,
for any homomorphism $\phi:\HH\to\GG$ there is the associated
principal $\GG$-bundle $\langle\phi\rangle=\HH_0\times_{\GG_0}^{\phi,\trg}\GG_1$ over $\HH$
equipped with the natural actions.
For any principal $\GG$-bundle $P$ over $\HH$, any point $p\in P$
induces a homomorphism of topological
groups $P_p:\I_{\pi(p)}(\HH)\to\I_{\epsilon(p)}(\GG)$, which is determined
by the equation $hp=pP_p(h)$, $h\in\I_{\pi(p)}(\HH)$.

Let $P$ be a topological principal $\GG$-bundle over $\HH$ and let $Q$
be a topological principal $\HH$-bundle over $\KK$,
where $\KK$ is another topological groupoid.
The tensor product $Q\otimes_\HH P$ of $Q$ and $P$
is the space of orbits of the pull-back
$Q\times_{\HH_0}^{\epsilon,\pi}P$ with respect to the diagonal action of $\HH$.
With the natural actions, the tensor product $Q\otimes_\HH P$
is a topological principal $\GG$-bundle over $\KK$.

An isomorphism between topological principal $\GG$-bundles $P,P'$ over $\HH$
is a homeomorphism $P\to P'$ which is equivariant for both the left and the right
action. A topological Morita map from $\HH$ to $\GG$ is an isomorphism class of topological
principal $\GG$-bundles over $\HH$. Such Morita maps form the Morita category $\GPD$
of topological groupoids, with composition induced by the tensor
product of topological principal bundles. 

For $C^\infty$-groupoids $\GG$ and $\HH$, a topological
principal $\GG$-bundle $P$ over $\HH$ is smooth
if it is equipped with a $C^\infty$-structure, both actions on $P$ are smooth, the map
$\pi:P\to\HH_0$ is a surjective submersion and the map
$P\times_{\GG_0}^{\epsilon,\trg}\GG_1 \to P\times_{\HH_0}^{\pi,\pi} P$,
$(p,g)\mapsto (p,pg)$, is a diffeomorphism.
The tensor product of two smooth principal bundles is again a smooth principal bundle.
An isomorphism between smooth principal $\GG$-bundles over $\HH$
is a diffeomorphism which is equivariant for both actions.
A smooth Morita map from $\HH$ to $\GG$ is an isomorphism class of smooth
principal $\GG$-bundles over $\HH$. Such smooth Morita maps form
the (smooth) Morita category $\GPDinfty$
of $C^\infty$-groupoids, with composition induced by the tensor
product of smooth principal bundles. 
The Morita category $\LGPD$ of Lie groupoids is the corresponding
full subcategory of the category $\GPDinfty$.

The isomorphisms in the Morita category (topological or smooth) are
called Morita equivalences.
Two topological (or $C^\infty$-) groupoids
are Morita equivalent if they are isomorphic
in the category $\GPD$ (respectively $\GPDinfty$).
A continuous (or smooth) functor $\phi:\HH\to\GG$
between topological (respectively $C^\infty$-) groupoids is a
weak equivalence if $\langle \phi\rangle$ represents a Morita equivalence
in the category $\GPD$ (respectively $\GPDinfty$).
For details, see \cite{Mrcun1996,MoerdijkMrcun2005}.

\begin{ex}\rm
(1) 
The Lie groupoid $\GL(\vV)$, associated to a smooth complex
vector bundle $\xi:\vV\to M$
over a smooth manifold $M$, is Morita equivalent to the Lie group $\GL(\vV_x)$,
for any $x\in M$. The natural inclusion
$\GL(\vV_x)\to\GL(\vV)$ is a weak equivalence.

(2) Let $\GG$ be a Lie groupoid and let $(U_i)_{i\in I}$
be a countable open cover of $\GG_0$.
The natural map $f:\coprod_i U_i \to \GG_0$ is a surjective local diffeomorphism,
and we may define the pull-back Lie groupoid
$f^\ast\GG$ over $\coprod_i U_i$ with arrows given by the pull-back
$(f^\ast\GG)_1=(\coprod_i U_i \times \coprod_i U_i)
 \times_{\GG_0\times\GG_0}^{f\times f,(\src,\trg)} \GG_1$.
The projection $(f^\ast\GG)_1\to\GG_1$ gives us a smooth functor
$f^\ast\GG\to\GG$ over $f$ which is a weak equivalence.
\end{ex}

\subsection{Path-components}
Let $\GG$ be a topological groupoid.
A path-component of $\GG$ is a full topological subgroupoid $\GG'$ of $\GG$
such that $\GG'_0$ is a minimal non-empty union of path-components of the space $\GG_0$
which is $\GG$-saturated.
Path-components of $\GG$ form a partition of $\GG$.
A topological groupoid is path-connected if it is empty or it has only one path-component.
Any path-component of a topological groupoid $\GG$ is of course a path-connected
topological groupoid and any path-connected topological
subgroupoid of $\GG$ is a subset of a path-component of $\GG$.

If $\HH'$ is a path-connected topological subgroupoid of a topological groupoid $\HH$
and $\phi:\HH\to\GG$ is a continuous functor, then the image
$\phi(\HH')$ is a path-connected topological subgroupoid of $\GG$.
An analogous result holds for Morita maps as well:

\begin{prop}\label{prop-components-to-components}
Let $\GG$ and $\HH$ be topological groupoids and let
$P$ be a topological principal $\GG$-bundle over $\HH$
with the structure maps $\pi:P\to\HH_0$ and $\epsilon:P\to\GG_0$.
If $\HH'$ is a path-connected topological subgroupoid of $\HH$,
then the set $\epsilon(\pi^{-1}(\HH'_0))$ is $\GG$-saturated and the restriction
$\GG|_{\epsilon(\pi^{-1}(\HH'_0))}$ is a path-connected full topological
subgroupoid of $\GG$.
Moreover, the space
$$P|_{\HH'}=P|_{\HH'_0}=\pi^{-1}(\HH'_0)$$
is a topological principal
$\GG$-bundle over $\HH'$, a topological principal
$\GG|_{\epsilon(\pi^{-1}(\HH'_0))}$-bundle over $\HH'$
and also a topological principal
$\GG'$-bundle over $\HH'$, where
$\GG'$ is the path-component of $\GG$ with 
$\GG|_{\epsilon(\pi^{-1}(\HH'_0))}\subset\GG'$.

In particular, if $\HH'$ is a path-component of $\HH$ and if
$P$ represents a Morita equivalence form $\HH$ to $\GG$, then the restriction
$P|_{\HH'}$ represents a Morita equivalence from $\HH'$ to $\GG'$.
\end{prop}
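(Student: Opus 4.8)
Write $X=P|_{\HH'}=\pi^{-1}(\HH'_0)$ and $Y=\epsilon(X)$. If $\HH'=\emptyset$ the statement is trivial, so assume $\HH'_0\neq\emptyset$; then $X\neq\emptyset$ and $Y\neq\emptyset$ since $\pi$ is surjective. I would first observe that $Y$ is $\GG$-saturated: if $p\in X$ and $g\in\GG_1$ with $\trg(g)=\epsilon(p)$, then $pg$ is defined, $\pi(pg)=\pi(p)\in\HH'_0$ so $pg\in X$, and $\epsilon(pg)=\src(g)$; hence the whole $\GG$-orbit of $\epsilon(p)$ lies in $Y$. Since $Y$ is $\GG$-saturated, the full subgroupoid $\GG|_Y$ consists of all arrows of $\GG$ whose source (equivalently, target) lies in $Y$; in particular, any $g\in\GG_1$ with $\trg(g)=\epsilon(p)$ for some $p\in X$ lies already in $\GG|_Y$.

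Next I would check that $X$ is a topological principal $\GG$-bundle over $\HH'$, and likewise a topological principal $\GG|_Y$-bundle over $\HH'$. The identities $\pi(pg)=\pi(p)$, $\epsilon(hp)=\epsilon(p)$, $h(pg)=(hp)g$ show that the right $\GG$-action (resp.\ $\GG|_Y$-action) and the left $\HH$-action on $P$ restrict to actions of $\GG$ (resp.\ $\GG|_Y$) and $\HH'$ on $X$; restricting local sections of $\pi$ to the intersection of their domains with $\HH'_0$ shows $\pi|_X\colon X\to\HH'_0$ is surjective with local sections; and since, by the previous paragraph, $X\times_{Y}^{\epsilon,\trg}(\GG|_Y)_1$ equals $X\times_{\GG_0}^{\epsilon,\trg}\GG_1$ as a topological space, the defining homeomorphism for $P$ restricts to the one required for $X$, both over $\GG$ and over $\GG|_Y$. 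I note for later that, consequently, whenever $p,p'\in X$ with $\pi(p)=\pi(p')$ there is a unique $g\in\GG|_Y$ with $p'=pg$.

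The crux is that $\GG|_Y$ is path-connected. Here I would first record the elementary fact that, for a topological groupoid $\GG''$, the object sets of the path-components of $\GG''$ are exactly the classes of the equivalence relation $\approx$ on $\GG''_0$ generated by: $x\approx x'$ whenever $x,x'$ lie in a common path-component of the space $\GG''_0$, or some arrow of $\GG''$ joins $x$ to $x'$; thus $\GG''$ is path-connected iff $\GG''_0$ is a single $\approx$-class. Given $y,y'\in Y$, pick $p,p'\in X$ above them. As $\HH'$ is path-connected, $\pi(p)$ and $\pi(p')$ are joined by a chain $u_0=\pi(p),u_1,\dots,u_n=\pi(p')$ in $\HH'_0$ whose consecutive terms lie in one path-component of $\HH'_0$ or are joined by an arrow of $\HH'$; fix lifts $p_i\in X$ with $\pi(p_i)=u_i$, $p_0=p$, $p_n=p'$. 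If an arrow $h\colon u_i\to u_{i+1}$ of $\HH'$ is available, then $\epsilon(hp_i)=\epsilon(p_i)$ while $hp_i$ and $p_{i+1}$ have equal $\pi$-values, so $p_{i+1}=(hp_i)g$ with $g\in\GG|_Y$ and hence $\epsilon(p_i)=\epsilon(hp_i)=\trg(g)\approx\src(g)=\epsilon(p_{i+1})$. If instead $u_i$ and $u_{i+1}$ are joined by a path $\gamma$ in $\HH'_0$, then by compactness $\gamma$ factors, on the pieces of a finite partition of its domain, through domains of local sections of $\pi$; composing $\gamma$ with these sections yields finitely many paths in $X$ which, at the partition points and at the two ends, differ by the $\GG|_Y$-action, and applying $\epsilon$ converts this into a chain witnessing $\epsilon(p_i)\approx\epsilon(p_{i+1})$, consecutive terms being joined alternately by a path in $Y$ and by an arrow of $\GG|_Y$. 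Chaining over $i$ gives $y\approx y'$, so $\GG|_Y$ is path-connected, hence (by the remark preceding the proposition, together with the fact that path-components partition $\GG$) contained in a unique path-component $\GG'$. Finally, since $\GG'_0\supseteq Y$ is $\GG$-saturated and $\GG'$ is full, every arrow of $\GG'$ with target in $Y$ already lies in $\GG|_Y$; thus the $\GG|_Y$-action on $X$ is a $\GG'$-action along $X\xrightarrow{\epsilon}Y\hookrightarrow\GG'_0$, the relevant fiber product is unchanged, and the same homeomorphism exhibits $X$ as a topological principal $\GG'$-bundle over $\HH'$.

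For the last assertion, assume $\HH'$ is a path-component of $\HH$ and $P$ represents a Morita equivalence; then $P$ is bi-principal (see \cite{Mrcun1996,MoerdijkMrcun2005}), so $\epsilon\colon P\to\GG_0$ is also surjective with local sections and $\HH_1\times_{\HH_0}^{\src,\pi}P\to P\times_{\GG_0}^{\epsilon,\epsilon}P$, $(h,p)\mapsto(hp,p)$, is a homeomorphism; in particular, $P$ is also a principal $\HH$-bundle over $\GG$. Applying the part of the proposition already proved to $P$ in this role assigns to every path-component $\GG'$ of $\GG$ a path-component of $\HH$; a short bookkeeping argument — using surjectivity of $\pi$ and $\epsilon$, the identities $\epsilon(\epsilon^{-1}(A))=A$ and $\pi(\pi^{-1}(B))=B$, and that path-components partition — shows that this assignment and the one $\HH'\mapsto\GG'$ above are mutually inverse, and moreover that $Y=\GG'_0$, i.e.\ $\GG|_Y=\GG'$. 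Granting $Y=\GG'_0$, the two remaining bi-principality conditions for $X$ over $\HH'$ and $\GG'$ follow by restricting the corresponding homeomorphisms for $P$ to the subspaces cut out by $X\subseteq P$, exactly as in the second and third paragraphs; hence $P|_{\HH'}$ represents a Morita equivalence from $\HH'$ to $\GG'$. The step I expect to be hardest is the path-connectedness of $\GG|_Y$ — transporting a path in $\HH'_0$ down to $\GG_0$ forces one to subdivide it along local sections of $\pi$ and reglue the pieces by the groupoid action, keeping the resulting chain of the type allowed in the description of path-components — with the identity $Y=\GG'_0$ in the Morita case a close second, since it is exactly where the partition property of path-components is used.
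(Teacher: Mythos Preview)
Your proof is correct and follows essentially the same approach as the paper: establish $\GG$-saturation of $Y$, then prove path-connectedness of $\GG|_Y$ by lifting a chain in $\HH'_0$ through local sections of $\pi$ (subdividing paths so each piece lies in the domain of a section) and gluing the resulting pieces via the $\GG$-action. The one place where you go further than the paper is the final Morita-equivalence assertion: the paper dismisses it as a ``direct consequence'' of fullness, whereas you actually argue, via bi-principality and the partition property of path-components, that $Y=\GG'_0$ --- a genuine detail the paper leaves implicit.
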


\begin{proof}
The subspace $\pi^{-1}(\HH'_0)$ of $P$ is $\GG$-saturated
and $\epsilon:P\to\GG_0$ is $\GG$-equivariant, so the
subspace $\epsilon(\pi^{-1}(\HH'_0))$ of $\GG_0$ is also $\GG$-saturated.
Take any two points $x,x'\in\epsilon(\pi^{-1}(\HH'_0))$.
Choose $p,p'\in \pi^{-1}(\HH'_0)$ with $\epsilon(p)=x$ and $\epsilon(p')=x'$.
The subgroupoid $\HH'$ is path-connected, so we can find continuous paths
$$ \gamma_0, \gamma_1, \ldots, \gamma_n : [0,1] \to \HH'_0 $$
such that $\gamma_0(0)=\pi(p)$, $\gamma_n(1)=\pi(p')$
and such that the points $\gamma_{k-1}(1)$ and $\gamma_k(0)$ are in the same
$\HH'$-orbit for any $k=1,2,\ldots,n$.
Since the map $\pi:P\to\HH_0$ is surjective and has local sections, we may choose
$\gamma_0, \gamma_1, \ldots, \gamma_n$ so that for any $k=0,1,\ldots,n$ the image of
$\gamma_k$ lies inside an open subset $U_k$ of $\HH_0$ on which there exists
a local section $\varsigma_k:U_k\to P$ of $\pi$.
We can choose the arrows $h_1,\ldots,h_n\in\HH'$ such that
$\src(h_k)=\gamma_{k-1}(1)$ and $\trg(h_k)=\gamma_k(0)$ for any $k=1,2,\ldots,n$.
Now
$$ \varsigma_0\com\gamma_0, \varsigma_1\com\gamma_1,\ldots, \varsigma_n\com\gamma_n $$
are continuous paths in the space $\pi^{-1}(\HH'_0)$, so
$$ \epsilon\com\varsigma_0\com\gamma_0, \epsilon\com\varsigma_1\com\gamma_1,\ldots,
   \epsilon\com\varsigma_n\com\gamma_n $$
are continuous paths in the space $\epsilon(\pi^{-1}(\HH'_0))$.

There exist a unique
$g_0\in\GG(\epsilon(p),\epsilon(\varsigma_0(\gamma_0(0))))$
such that $p=\varsigma_0(\gamma_0(0))g_0$
and a unique $g_{n+1}\in\GG(\epsilon(\varsigma_n(\gamma_n(1))),\epsilon(p'))$
such that $\varsigma_n(\gamma_n(1))=p'g_{n+1}$.
Furthermore, for any $k=1,2,\ldots,n$ there exists a unique
$g_{k}\in\GG(\epsilon(\varsigma_{k-1}(\gamma_{k-1}(1))),\epsilon(\varsigma_k(\gamma_k(0))))$
such that
$$ h_k(\varsigma_{k-1}(\gamma_{k-1}(1)))= \varsigma_k(\gamma_k(0))g_k .$$
The existence of the sequence of paths
$\epsilon\com\varsigma_0\com\gamma_0, \epsilon\com\varsigma_1\com\gamma_1,\ldots,
\epsilon\com\varsigma_n\com\gamma_n$ in $\epsilon(\pi^{-1}(\HH'_0))$
therefore implies that the points $x=\epsilon(p)$ and $x'=\epsilon(p')$
are in the same path-component of
the topological groupoid $\GG|_{\epsilon(\pi^{-1}(\HH'_0))}$.
But since this is true for any points $x$ and $x'$ in $\epsilon(\pi^{-1}(\HH'_0))$,
it follows that the  topological groupoid $\GG|_{\epsilon(\pi^{-1}(\HH'_0))}$
is path-connected. The rest of the proposition is a direct
consequence of the fact that
both $\GG|_{\epsilon(\pi^{-1}(\HH'_0))}$ and $\GG'$ are full topological subgroupoids
of $\GG$.
\end{proof}

A pointed topological groupoid is a topological groupoid $\GG$ with a chosen point
$x\in\GG_0$. The point $x$ can be equivalently given by a homomorphism of topological 
groupoids $(\ast\rightrightarrows\ast)\to\GG$, where $\ast$ denotes a fixed space
with only one point. The category of pointed topological groupoids is the
coslice category
$$ \Gpd_\ast = (\ast\rightrightarrows\ast)\downarrow \Gpd $$
For any pointed topological groupoid $(\GG;x)$ denote by
$\Pc(\GG;x)$
the path-component of $\GG$ with $x\in \Pc(\GG;x)_0$. This definition clearly
extends to a functor $\Pc:\Gpd_\ast \to \Gpd_\ast$.

Any Morita map $(\ast\rightrightarrows\ast)\to\GG$ is also determined
by a point $x\in\GG_0$, but two points give the same Morita map if, and only if, they
lie in the same $\GG$-orbit. However, any $\GG$-orbit is a subset of the space
of objects of a path-component of $\GG$. Together with Proposition
\ref{prop-components-to-components} this implies that the functor $\Pc$ is well-defined
on the coslice category of the Morita category,
$\Pc:(\ast\rightrightarrows\ast)\downarrow \GPD 
 \to (\ast\rightrightarrows\ast)\downarrow \GPD$.

Analogous results holds for smooth principal bundles;
note that a path-com\-po\-nent of a $C^\infty$-groupoid
is an open $C^\infty$-subgroupoid.

\subsection{Representations of Lie groupoids}
Let $\GG$ be a Lie groupoid. A representation
of $\GG$ (of finite rank) is a smooth complex vector bundle $\xi:\vV\to\GG_0$,
together with a homomorphism of Lie groupoids
$$ r:\GG\to\GL(\vV) $$
which is the identity on objects. Sometimes
we will denote such a representation simply by $\vV$.
Such a representation can be equivalently viewed as a smooth
left action of $\GG$ on $\vV$, $(g,v)\mapsto gv=r(g)v$,
along the map $\xi$, which is fiber-wise linear. Note that
the fiber $\vV_x$ of $\vV$
is a representation of the Lie group $\I_x(\GG)$,
for any $x\in\GG_0$.
A homomorphism between representations of $\GG$ is 
a homomorphism of complex vector bundles over $\GG_0$ which is
$\GG$-equivariant.

A unitary representation of $\GG$ is a representation $r:\GG\to\GL(\vV)$
of $\GG$ such that $\vV$ is equipped with a Hermitian metric and
$r(\GG)\subset\U(\vV)$.

A representation $r:\GG\to\GL(\vV)$ is
faithful if it is injective.
This is true if, and only if, the fiber $\vV_x$ is a faithful representation of the
Lie group $\I_x(\GG)$, for all $x\in\GG_0$.
A Lie groupoid is faithfully (unitary) representable if it admits a faithful (unitary)
representation.
A Lie groupoid is locally faithfully (unitary) representable if
there exists an open cover $(U_i)_{i\in I}$ of $\GG_0$ such that the restriction
$\GG|_{U_i}$ is faithfully (unitary) representable for all $i\in I$.
Note that any locally faithfully representable Lie groupoid is
necessarily  Hausdorff because $\GL(\vV)$ is Hausdorff.

Let $P$ be a smooth principal $\GG$-bundle over another Lie groupoid $\HH$.
For any representation $\vV$ of $\GG$ there is the induced representation
$P^\ast\vV$ of $\HH$. It is the quotient of the diagonal action of $\GG$
on the pull-back vector bundle $P\times_{\GG_0}^{\epsilon,\xi}\vV$,
with the obvious left action of $\HH$. In particular, if
$\phi:\HH\to\GG$ is a smooth functor, then $\langle \phi\rangle^\ast \vV$
is given by the usual pull-back $\phi_0^\ast\vV$.
For another Lie groupoid $\KK$ and a smooth principal $\HH$-bundle $Q$ over $\KK$
one can check that there is a natural isomorphism between the representations
$(Q\otimes_\HH P)^\ast\vV$ and $Q^\ast(P^\ast\vV)$.
Furthermore, for any $p\in P$ the representation $P^\ast(\vV)_{\pi(p)}$ of
the Lie group $\I_{\pi(p)}(\HH)$
is naturally isomorphic to the composition of the representation
$\I_{\epsilon(p)}(\GG)\to\GL(\vV_{\epsilon(p)})$ with the homomorphism
$P_p:\I_{\pi(p)}(\HH)\to\I_{\epsilon(p)}(\GG)$.

Suppose that $P$ is a Morita equivalence. This implies that 
$P_p:\I_{\pi(p)}(\HH)\to\I_{\epsilon(p)}(\GG)$ is an isomorphism of Lie groups,
for any $p\in P$. In particular, if
the representation $\vV$ is faithful,
then the representation $P^\ast\vV$ is faithful.
Hence, it follows that the faithful representability is
a Morita-invariant property of Lie groupoids:
if two Lie groupoids are Morita equivalent,
then one is faithfully representable if, and only if, the other is.
It is not difficult to show that the local faithful representability
is a Morita-invariant property as well.

\begin{ex}\rm
(1)
Let $G$ be matrix Lie group. In other words, $G$ is a Lie subgroup of the
general linear group $\GL(k,\CC)$, for a suitable natural number $k$.
Suppose that we have a smooth right action of $G$ on
a smooth manifold $M$. Then the associated action groupoid
$M\rtimes G$ over $M$ is faithfully representable. Indeed, we can
take the trivial vector bundle $\vV=M\times\CC^k$ over $M$
and define the left action of $M\rtimes G$
on $\vV$ by $(y,A)(yA,v)=(y,Av)$.
Analogously, the action groupoid associated to a smooth left action
of a matrix Lie group on a smooth manifold is faithfully representable.

Since any compact Lie group is isomorphic to a Lie subgroup of the unitary group
$\U(k)$, for some natural number $k$, it follows that
the action Lie groupoid associated to a smooth action of a compact Lie group
is faithfully unitary representable.

(2)
A Lie groupoid $\GG$ is proper if it is a smooth manifold
and the map $(\src,\trg):\GG_1\to\GG_0\times\GG_0$ is proper.
For example, the action Lie groupoid of a smooth action
of a compact Lie group on a smooth manifold is a proper
Lie groupoid. Properness is a Morita-invariant property of Lie groupoids.

Any faithfully unitary representable proper Lie groupoid $\GG$ is 
Morita equivalent to the action Lie groupoid of a smooth action
of a unitary group on a smooth manifold.
The proof of this well-known fact, essentially given by Satake \cite{Satake}
for effective orbifolds,
can be found in \cite{Kalisnik} and goes as follows:
Let $r:\GG\to\U(\vV)$ be a faithful unitary representation
and let $k$ be the rank of $\vV$.
Then $\GG$ acts freely from the left on the unitary frame bundle $\UFr(\vV)$
of $\vV$, so that the orbit space $M=\GG\setminus\UFr(\vV)$ is a smooth manifold.
The natural right action of $\U(k)$ on $\UFr(\vV)$ induces a right
action on $M$, and the Lie groupoid $\GG$
is Morita equivalent to the action Lie groupoid $M\rtimes \U(k)$.

Any proper Lie groupoid is locally linearizable and hence
locally Morita equivalent to the action Lie groupoid
of an action of a compact Lie group (see \cite{CrainicStruchiner,Weinstein,Zung}).
This implies that any proper Lie groupoid is
locally faithfully unitary representable.
\end{ex}

\section{Monodromy group}

Let $\GG$ be a topological groupoid. 
The union $\I(\GG)=\cup\{\I_x(\GG) \setsep x\in \GG_0\}$
of the isotropy groups of $\GG$
is a topological bundle of topological groups over $\GG_0$ and a normal
topological subgroupoid of $\GG$. 
The groupoid $\GG$ acts continuously on $\I(\GG)$ by conjugation:
for any $g\in\GG$ we have the associated isomorphism
of topological groups $\con_g:\I_{\src(g)}(\GG)\to\I_{\trg(g)}(\GG)$ given by
$$ \con_g(h)=ghg^{-1}. $$
We say that a function $\tau$, defined on $\I(\GG)$, is
\emph{$\GG$-conjugation-invariant}
if it is constant along the $\GG$-conjugation classes, that is, if
$\tau(ghg^{-1}) = \tau(h)$ for all $g\in\GG$ and all
$h\in\I_{\src(g)}(\GG)$.

Let $K$ be a locally compact Hausdorff topological group.
A homomorphism of topological groupoids
$\varphi:K\to\GG$ is given by a point $\omega(\varphi)$ in $\GG_0$
and a continuous homomorphism of groups
$\varphi:K\to\I_{\omega(\varphi)}(\GG)$.
The set
$$ \hom(K,\GG) $$
of all such homomorphisms is a subspace of
the space $C(K,\GG)$ of
all continuous maps from $K$ to $\GG$ with the compact-open topology.
With respect to this topology, the map
$\omega:\hom(K,\GG)\to \GG_0$ is continuous.
There is a natural continuous left $\GG$-action on $\hom(K,\GG)$
along $\omega$ defined by conjugation: 
for any $\varphi\in\hom(K,\GG)$ and any $g\in\GG(\omega(\varphi),\oo)$,
the homomorphism $g\varphi\in\hom(K,\GG)$ is given by
$$ (g \varphi)(k)=(\con_g\com \varphi)(k)=g\varphi(k)g^{-1} $$
and satisfies $\omega(g\varphi)=\trg(g)$.
We have the associated action groupoid
$$ \GG^K = \GG\ltimes\hom(K,\GG), $$
while the $\GG$-equivariant map
$\omega:\hom(K,\GG)\to \GG_0$ induces
a continuous functor of topological groupoids
$\omega=\omega_\GG:\GG^K \to \GG$.

For any $\rho\in\hom(K,\GG)$ we write
$$\hom(K,\GG;\rho)$$
for the subspace of $\hom(K,\GG)$
which is the space of objects of the path-component $\Pc(\GG^K;\rho)$
of the topological groupoid $\GG^K$
with $\rho\in\hom(K,\GG;\rho)$.
In other words, the space $\hom(K,\GG;\rho)$
is the minimal union of path-components of the space
$\hom(K,\GG)$ which includes
the point $\rho$ and
is saturated for the $\GG$-action.
The corresponding path-component of $\GG^K$ is therefore
the action groupoid
$$\Pc(\GG^K;\rho)=\GG\ltimes\hom(K,\GG;\rho).$$

If $L$ is another locally compact Hausdorff topological group,
the composition gives us a continuous map
$$ \hom(K,\GG) \times \hom(L,K) \to \hom(L,\GG). $$
For any $\alpha\in\hom(L,K)$, the continuous map
$\hom(K,\GG)\to \hom(L,\GG)$, $\varphi\mapsto \varphi\com\alpha$,
is $\GG$-equivariant, which implies
$\hom(K,\GG;\rho) \com \alpha \subset \hom(L,\GG;\rho\com\alpha)$.
In particular, if $\alpha$ is an isomorphism, then
$$ \hom(K,\GG;\rho) \com \alpha = \hom(L,\GG;\rho\com\alpha).$$
The topological group $\Aut(K)$ of automorphisms of the topological group $K$
acts continuously on $\hom(K,\GG)$ by composition from the right,
and this action commutes with the left $\GG$-action.
In particular, the group $\Aut(K)$ also acts continuously
on the action groupoid $\GG^K$
by groupoid isomorphisms.

We define the \emph{monodromy group} of the topological groupoid
$\GG$ at $\rho$ to be the subgroup
$$ \Mon(\GG;\rho) =
\{ \mu\in\Aut(K) \setsep \hom(K,\GG;\rho) \com\mu = \hom(K,\GG;\rho) \} $$
of the group $\Aut(K)$. It follows that
the monodromy group $\Mon(\GG;\rho)$ acts continuously 
on the space $\hom(K,\GG;\rho)$ and also
on the topological groupoid  $\Pc(\GG^K;\rho)$
by groupoid isomorphisms.
Let us also denote
$$ \Sigma(\GG;\rho)=\omega(\hom(K,\GG;\rho)).$$
Since the left $\GG$-action on $\hom(K,\GG;\rho)$ commutes
with the right $\Mon(\GG;\rho)$-action, it induces a left $\GG$-action
on the quotient
$\hom(K,\GG;\rho)/\Mon(\GG;\rho)$, so that the quotient projection
$\hom(K,\GG;\rho)\to\hom(K,\GG;\rho)/\Mon(\GG;\rho)$ is $\GG$-equivariant.
The map $\omega:\hom(K,\GG;\rho)\to\GG_0$ factors as a surjective map
$\hom(K,\GG;\rho)/\Mon(\GG;\rho)\to\Sigma(\GG;\rho)$
which is also $\GG$-invariant.

If $\GG$ is a Lie groupoid and $K$ a Lie group, then
$\I(\GG)$ is a topological bundle of Lie groups over $\GG_0$ and
$\GG$ acts on $\I(\GG)$ by isomorphisms of Lie groups.
In this case, all the homomorphisms in $\hom(K,\GG)$
are homomorphisms of Lie groups
and all the automorphisms in $\Aut(K)$ are automorphisms
of the Lie group $K$. Indeed, recall that
any continuous group homomorphism between Lie groups
is necessarily smooth and hence a homomorphism of Lie groups.

\begin{prop}\label{prop-monodromy-first-properties}
Let $\GG$ be a topological groupoid,
let $K$ be a locally compact Hausdorff topological group
and let $\rho\in\hom(K,\GG)$.

(i) For any $\varphi\in\hom(K,\GG;\rho)$ we have
$\Mon(\GG;\varphi)=\Mon(\GG;\rho)$.

(ii) For any $g\in\GG(\omega(\rho),\oo)$ we have
$\Mon(\GG;\con_g\com\rho)=\Mon(\GG;\rho)$.

(iii) For any continuous isomorphism $\alpha:L\to K$
between topological groups we have
$\Mon(\GG;\rho\com\alpha)=\alpha^{-1}\com\Mon(\GG;\rho)\com\alpha$.

(iv) If $\rho$ is a monomorphism, then
$\rho\com\Mon(\GG;\rho)=\Mon(\GG;\id_{\rho(K)})\com\rho$.

(v) The group $\Inn(K)$ of inner automorphisms of $K$
is a normal subgroup the monodromy group $\Mon(\GG;\rho)$.
\end{prop}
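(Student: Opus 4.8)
Statements (i)--(iii) are formal consequences of the definition of $\Mon(\GG;\rho)$ and of the elementary properties of the composition action recorded just before the proposition; (iv) and (v) are then deduced from these. For (i), the hypothesis $\varphi\in\hom(K,\GG;\rho)$ says precisely that $\varphi$ is an object of the path-component $\Pc(\GG^K;\rho)$, so $\Pc(\GG^K;\varphi)=\Pc(\GG^K;\rho)$ and hence $\hom(K,\GG;\varphi)=\hom(K,\GG;\rho)$ as subspaces of $\hom(K,\GG)$; the condition $\hom(K,\GG;-)\com\mu=\hom(K,\GG;-)$ defining membership in the monodromy group therefore singles out the same automorphisms $\mu$ in both cases. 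For (ii), observe that $\con_g\com\rho$ is exactly the homomorphism $g\rho$ produced by the left $\GG$-action on $\hom(K,\GG)$, so it lies in the $\GG$-orbit of $\rho$; since $\hom(K,\GG;\rho)$ is $\GG$-saturated and contains $\rho$, it contains $\con_g\com\rho$, and (i) gives $\Mon(\GG;\con_g\com\rho)=\Mon(\GG;\rho)$.

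For (iii) I would start from the identity $\hom(K,\GG;\rho)\com\alpha=\hom(L,\GG;\rho\com\alpha)$, valid for an isomorphism $\alpha$ as noted above: an automorphism $\mu\in\Aut(L)$ lies in $\Mon(\GG;\rho\com\alpha)$ iff $\hom(K,\GG;\rho)\com(\alpha\com\mu)=\hom(K,\GG;\rho)\com\alpha$, and, since $\alpha$ is bijective, this holds iff $\hom(K,\GG;\rho)\com(\alpha\com\mu\com\alpha^{-1})=\hom(K,\GG;\rho)$, i.e. iff $\alpha\com\mu\com\alpha^{-1}\in\Mon(\GG;\rho)$; this is the asserted conjugation relation. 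Statement (iv) is then obtained by applying (iii) to the corestriction $\alpha\colon K\to\rho(K)$ of the monomorphism $\rho$, regarded as an isomorphism of topological groups onto the subgroup $\rho(K)$ of $\I_{\omega(\rho)}(\GG)$ equipped with the topology for which $\alpha$ is a homeomorphism, so that $\id_{\rho(K)}\in\hom(\rho(K),\GG)$ makes sense and $\id_{\rho(K)}\com\alpha=\rho$. Then (iii) gives $\Mon(\GG;\rho)=\alpha^{-1}\com\Mon(\GG;\id_{\rho(K)})\com\alpha$, whence $\rho\com\Mon(\GG;\rho)=\rho\com\alpha^{-1}\com\Mon(\GG;\id_{\rho(K)})\com\alpha=\Mon(\GG;\id_{\rho(K)})\com\rho$ after unwinding the identifications. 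I expect the only real pitfall here to be this last bit of bookkeeping: one must keep track of the fact that $\id_{\rho(K)}$ refers to the intrinsic group topology on $\rho(K)$, so that (iii) genuinely applies.

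The substantive step is (v). For $k\in K$, let $\con_k\in\Inn(K)$ denote conjugation by $k$. Then for every $\varphi\in\hom(K,\GG)$ one computes
$$(\varphi\com\con_k)(l)=\varphi(klk^{-1})=\varphi(k)\,\varphi(l)\,\varphi(k)^{-1}=\bigl(\varphi(k)\,\varphi\bigr)(l)\qquad(l\in K),$$
where $\varphi(k)\in\I_{\omega(\varphi)}(\GG)\subseteq\GG(\omega(\varphi),\oo)$; thus precomposition with the inner automorphism $\con_k$ coincides with the left action of the arrow $\varphi(k)$, so $\varphi\com\con_k$ lies in the $\GG$-orbit of $\varphi$. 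Applying this to every $\varphi\in\hom(K,\GG;\rho)$ and using that this set is $\GG$-saturated yields $\hom(K,\GG;\rho)\com\con_k\subseteq\hom(K,\GG;\rho)$; running the same argument with $k^{-1}$ gives the reverse inclusion, so $\con_k\in\Mon(\GG;\rho)$ and hence $\Inn(K)\subseteq\Mon(\GG;\rho)$. Finally, $\Inn(K)$ is a normal subgroup of $\Aut(K)$ and is contained in the subgroup $\Mon(\GG;\rho)$, so it is a normal subgroup of $\Mon(\GG;\rho)$ as well, which completes the plan.
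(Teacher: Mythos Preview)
Your proposal is correct and follows essentially the same approach as the paper's own proof. The paper is more compressed---for (v) it argues directly with $\rho$ via the chain $\hom(K,\GG;\rho)=\hom(K,\GG;\con_{\rho(k)}\com\rho)=\hom(K,\GG;\rho\com\con_k)=\hom(K,\GG;\rho)\com\con_k$, rather than your pointwise computation for every $\varphi$, and it simply says (iii) ``directly implies (iv)'' without spelling out the corestriction bookkeeping you flag---but the logical structure and key identities are identical.
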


\begin{proof}
Part (i) follows from the equality
$\hom(K,\GG;\varphi)=\hom(K,\GG;\rho)$ and the definition of the monodromy group.
Part (ii) is a special case of part (i), as
the space $\hom(K,\GG;\rho)$ is saturated for the conjugation and therefore
$\con_g\com\rho\in\hom(K,\GG;\rho)$.
Observe that for any $\mu\in\Mon(\GG;\rho)$ we have
\begin{align*}
\hom(L,\GG;\rho\com\alpha) \com (\alpha^{-1}\com\mu\com\alpha)
&= \hom(K,\GG;\rho)\com (\mu\com\alpha) \\
&= \hom(K,\GG;\rho)\com\alpha
 = \hom(L,\GG;\rho\com\alpha),
\end{align*}
which yields
$\alpha^{-1}\com\Mon(\GG;\rho)\com\alpha\subset \Mon(\GG;\rho\com\alpha)$.
The equality (iii) now follows by symmetry and
directly implies (iv).
To prove part (v),
observe that
$$ \hom(K,\GG;\rho)=\hom(K,\GG;\con_{\rho(k)}\com\rho)
= \hom(K,\GG;\rho\com\con_{k})
= \hom(K,\GG;\rho)\com\con_{k}
$$
for any $k\in K$, which yields $\Inn(K)\subset\Mon(\GG;\rho)$.
The group $\Inn(K)$ is in fact normal in $\Aut(K)$, so it is normal in
$\Mon(\GG;\rho)$ as well.
\end{proof}

It is well known that for a compact Lie group $K$, the group $\Aut(K)$
is a Lie group and that the subgroup $\Inn(K)$ is open in $\Aut(K)$
(for example, see \cite{Hochschild,MontgomeryZippin}).
Proposition \ref{prop-monodromy-first-properties}(v) thus immediately implies:

\begin{prop}\label{prop-monodromy-second-properties}
Let $\GG$ be a topological groupoid,
let $K$ be a compact Lie group
and let $\rho\in\hom(K,\GG)$.
Then the monodromy group $\Mon(\GG;\rho)$ is
an open subgroup of the Lie group $\Aut(K)$.
\end{prop}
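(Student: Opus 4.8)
The plan is to obtain the statement as an immediate consequence of Proposition \ref{prop-monodromy-first-properties}(v) together with the two facts recalled just above it: for a compact Lie group $K$, the group $\Aut(K)$ is a Lie group, and the subgroup $\Inn(K)$ is open in $\Aut(K)$.

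First I would isolate the elementary fact from the theory of topological groups that does all the work: if $H$ is a subgroup of a topological group $G$ and $H$ contains a subgroup $U$ which is open in $G$, then $H$ is open in $G$. This holds because $H=\bigcup_{h\in H}hU$ is a union of left translates of $U$ (each translate lies in $H$ since $U\subseteq H$, and contains $h$ since the unit lies in $U$), and every such translate is open since left translation by $h$ is a homeomorphism of $G$. One may note in passing that an open subgroup is automatically closed, being the complement of the union of its remaining cosets, although this is not part of the claim.

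Then I would simply apply this with $G=\Aut(K)$, $U=\Inn(K)$ and $H=\Mon(\GG;\rho)$. Proposition \ref{prop-monodromy-first-properties}(v) gives $\Inn(K)\subseteq\Mon(\GG;\rho)$, and $\Inn(K)$ is open in $\Aut(K)$ by the cited structure theory of automorphism groups of compact Lie groups; hence $\Mon(\GG;\rho)$ is an open subgroup of the Lie group $\Aut(K)$, as required.

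There is no genuine obstacle here: the content has already been absorbed into Proposition \ref{prop-monodromy-first-properties}(v) and into the structure theory of $\Aut(K)$ for compact $K$. The only thing to verify is bookkeeping — that the topology meant for $\Mon(\GG;\rho)$ in the statement is the subspace topology inherited from $\Aut(K)$, which it is by construction. If a sharper picture were wanted, one could add that $\Mon(\GG;\rho)$ is a union of connected components of $\Aut(K)$ and that $\Mon(\GG;\rho)/\Inn(K)$ is a discrete subgroup of $\Aut(K)/\Inn(K)$, but this is not needed for the proposition.
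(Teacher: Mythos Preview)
Your proof is correct and follows exactly the same approach as the paper, which simply observes that the proposition follows immediately from Proposition~\ref{prop-monodromy-first-properties}(v) together with the fact that $\Inn(K)$ is open in the Lie group $\Aut(K)$ for compact $K$. You have merely made explicit the elementary topological-group fact that a subgroup containing an open subgroup is itself open, which the paper leaves implicit.
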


\subsection{Functoriality}
Let $K$ be a locally compact Hausdorff topological group,
let $\GG$ and $\HH$ be topological groupoids and let $P$ be
a topological principal $\GG$-bundle over $\HH$.
The space
$$ P^K = P\times_{\HH_0}^{\pi,\omega} \hom(K,\HH) $$
is a topological principal $\GG^K$-bundle over the groupoid $\HH^K$,
with the structure maps 
$P^K\to\hom(K,\HH)$, $(p,\vartheta)\mapsto \vartheta$,
and $P^K\to\hom(K,\GG)$, $(p,\vartheta)\mapsto P_p\com\vartheta$,
with the left $\HH^K$-action given by
$(h,\vartheta)(p,\vartheta)=(hp,\con_h\com\vartheta)$
and with the right $\GG^K$-action given by
$(p,\vartheta)(g,\con_{g^{-1}}\com P_{p}\com\vartheta)=(pg,\vartheta)$
(see \cite{JelencMrcun}).
This definition gives us a functor
$$ (-)^K:\GPD\to\GPD .$$
The group $\Aut(K)$ acts continuously on the space $P^K$
in the obvious way from the right such that
both the structure maps
$P^K\to\hom(K,\HH)$
and $P^K\to\hom(K,\GG)$, the left $\HH^K$-action and the right
$\GG^K$-action are $\Aut(K)$-equivariant.
There is a natural isomorphism of topological
principal bundles
$$P^K\otimes\langle\omega_\GG\rangle\cong
\langle\omega_\HH\rangle\otimes P.$$

Let $\sigma\in\hom(K,\HH)$, choose any $p\in P$ with
$\pi(p)=\omega(\sigma)$ and put $\rho=P_p\com\sigma$.
Then $P^K|_{\hom(K,\HH;\sigma)}$ is a topological
principal $\Pc(\GG^K;\rho)$-bundle
over the groupoid $\Pc(\HH^K;\sigma)$.
Of course, if we choose a different point $p'\in P$
with $\pi(p')=\omega(\sigma)$
and put $\rho'=P_{p'}\com\sigma$, 
there is a unique $g\in\GG(\oo,\epsilon(p))$
such that $pg=p'$, which implies that
$\rho'=P_{pg}\com\sigma=\con_{g^{-1}}\com P_p\com\sigma=\con_{g^{-1}}\com \rho$
and hence $\hom(K,\GG;\rho)=\hom(K,\GG;\rho')$.

The image of the set
$P^K|_{\hom(K,\HH;\sigma)}$ along the
structure map $P^K\to\hom(K,\GG)$
is a subset of $\hom(K,\GG;\rho)$.
In particular, for any automorphism
$\mu\in\Mon(\HH;\sigma)$ we have
$\sigma\com\mu\in\hom(K,\HH;\sigma)$ and hence
$(p,\sigma\com\mu)\in P^K|_{\hom(K,\HH;\sigma)}$.
As the structure map
$P^K\to\hom(K,\GG)$ maps 
$(p,\sigma\com\mu)$ into 
$$ P_p\com\sigma\com\mu = \rho\com\mu , $$
we have $\rho\com\mu\in \hom(K,\GG;\rho)$ and
therefore $\mu\in \Mon(\GG;\rho)$. We can conclude:

\begin{prop}
Let $K$ be a locally compact Hausdorff topological group,
let $\GG$ and $\HH$ be topological groupoids and let $P$ be
a topological principal $\GG$-bundle over $\HH$.
For any $\sigma\in\hom(K,\HH)$ and any $p\in P$ with
$\pi(p)=\omega(\sigma)$  we have
$$ \Mon(\HH;\sigma)\subset \Mon(\GG; P_p\com\sigma).$$
\end{prop}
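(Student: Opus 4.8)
The plan is to run everything through the functoriality construction $(-)^K$ developed above. Write $\rho=P_p\com\sigma$. The topological principal $\GG^K$-bundle $P^K$ over $\HH^K$ carries the path-component $\Pc(\HH^K;\sigma)$ to a path-component of $\GG^K$, and tracking the right $\Aut(K)$-action through this passage is exactly what will compare $\Mon(\HH;\sigma)$ with $\Mon(\GG;\rho)$.

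The one genuinely non-formal step, which I expect to be the main obstacle, is to see that the structure map $P^K\to\hom(K,\GG)$, $(q,\vartheta)\mapsto P_q\com\vartheta$, sends the restriction $P^K|_{\hom(K,\HH;\sigma)}$ into $\hom(K,\GG;\rho)$. For this I would apply Proposition \ref{prop-components-to-components} to the bundle $P^K$ and the path-connected subgroupoid $\Pc(\HH^K;\sigma)$ of $\HH^K$: it yields that $P^K|_{\hom(K,\HH;\sigma)}$ is a topological principal $\Pc(\GG^K;\rho)$-bundle over $\Pc(\HH^K;\sigma)$ — and $\rho$ is the image of $(p,\sigma)$, so it does land in the correct path-component — and in particular the image of $P^K|_{\hom(K,\HH;\sigma)}$ under $(q,\vartheta)\mapsto P_q\com\vartheta$ is contained in the object set $\hom(K,\GG;\rho)$ of $\Pc(\GG^K;\rho)$. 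This is precisely the identification $P^K\otimes\langle\omega_\GG\rangle\cong\langle\omega_\HH\rangle\otimes P$ specialized to path-components, with the choice of $p$ satisfying $\pi(p)=\omega(\sigma)$ fixing the basepoint; everything after this is bookkeeping.

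Granting this, let $\mu\in\Mon(\HH;\sigma)$, so $\hom(K,\HH;\sigma)\com\mu=\hom(K,\HH;\sigma)$. Since $\sigma\in\hom(K,\HH;\sigma)$ we get $\sigma\com\mu\in\hom(K,\HH;\sigma)$, hence $(p,\sigma\com\mu)\in P^K|_{\hom(K,\HH;\sigma)}$, and applying the structure map gives $P_p\com(\sigma\com\mu)=(P_p\com\sigma)\com\mu=\rho\com\mu\in\hom(K,\GG;\rho)$. It then remains to upgrade this membership to the equality $\hom(K,\GG;\rho)\com\mu=\hom(K,\GG;\rho)$: I would combine the fact that $\hom(K,\GG;\rho)\com\mu=\hom(K,\GG;\rho\com\mu)$ for the isomorphism $\mu$ (established in the discussion above) with Proposition \ref{prop-monodromy-first-properties}(i), which, because $\rho\com\mu\in\hom(K,\GG;\rho)$, gives $\hom(K,\GG;\rho\com\mu)=\hom(K,\GG;\rho)$. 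Chaining these identities yields $\mu\in\Mon(\GG;\rho)$, which is the desired inclusion.
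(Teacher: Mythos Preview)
Your proposal is correct and follows essentially the same argument as the paper: both use the principal $\GG^K$-bundle $P^K$ over $\HH^K$, apply Proposition~\ref{prop-components-to-components} to see that $P^K|_{\hom(K,\HH;\sigma)}$ lands in $\hom(K,\GG;\rho)$, and then evaluate the structure map at $(p,\sigma\com\mu)$ to obtain $\rho\com\mu\in\hom(K,\GG;\rho)$. Your final paragraph makes explicit the step from $\rho\com\mu\in\hom(K,\GG;\rho)$ to $\mu\in\Mon(\GG;\rho)$ via the identity $\hom(K,\GG;\rho)\com\mu=\hom(K,\GG;\rho\com\mu)$, which the paper leaves implicit, but otherwise the arguments are the same.
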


If $P$ represents a Morita equivalence, then 
$P^K$ is a Morita equivalence from $\HH^K$ to $\GG^K$ and,
by Proposition \ref{prop-components-to-components},
the restriction
$P^K|_{\hom(K,\HH;\sigma)}$ is a Morita equivalence from
$\Pc(\HH^K;\sigma)$ to
$\Pc(\GG^K;\rho)$.
In this case we have
$$\Mon(\HH;\sigma)=\Mon(\GG;\rho).$$

In particular, any continuous functor $\phi:\HH\to\GG$ 
induces an $\Aut(K)$-equivariant continuous functor
$\phi^K:\HH^K\to\GG^K$ given by
$$ \phi^K(h,\vartheta)
 = (\phi(h),\hom(K,\phi)(\vartheta))
 = (\phi(h),\phi\com\vartheta)$$ 
and this gives us a functor $(-)^K:\Gpd\to\Gpd$
such that $\omega_\GG\com\phi^K = \phi\com\omega_\HH$. 
The functor $\phi^K$ restricts to a
continuous functor
$\Pc(\HH^K;\sigma) \to \Pc(\GG^K,\phi\com\sigma)$
and we have 
$$ \Mon(\HH;\sigma)\subset \Mon(\GG;\phi\com\sigma).$$
If $\phi:\HH\to\GG$ is a weak equivalence, then
$\phi^K:\HH^K\to\GG^K$ is a weak equivalence,
its restriction
$\Pc(\HH^K;\sigma) \to \Pc(\GG^K,\phi\com\sigma)$
is also a weak equivalence and
$$ \Mon(\HH;\sigma)= \Mon(\GG;\phi\com\sigma).$$

We may in fact consider the category of $K$-pointed
topological groupoids, which is the coslice category
$$ \Gpd_K=(K\rightrightarrows\ast)\downarrow \Gpd.$$
The assignment $(\oo)^K$ gives us a functor
$(\oo)^K:\Gpd_K\to\Gpd_\ast$, hence the composition
$$ \Pc\com (\oo)^K:\Gpd_K\to\Gpd_\ast $$
is also a functor.
A Morita map from $(K\rightrightarrows\ast)$ to $\GG$
corresponds to a $\GG$-orbit in $\hom(K,\GG)$, so
the assignment $(\oo)^K$ also induces a functor
$$ (\oo)^K:(K\rightrightarrows\ast)\downarrow\GPD
\to(\ast\rightrightarrows\ast)\downarrow\GPD $$
and the composition $\Pc\com (\oo)^K$ is also a functor
$(K\rightrightarrows\ast)\downarrow\GPD
\to(\ast\rightrightarrows\ast)\downarrow\GPD$.
Furthermore, the assignment of the monodromy group can be seen
as a functor
$$ \Mon : \Gpd_K \to \Grp $$
to the category $\Grp$ of groups,
which also factors as a functor
$$ \Mon : (K\rightrightarrows\ast)\downarrow\GPD \to \Grp. $$
The image of these two functors in fact lies in the lattice
of subgroups of the group $\Aut(K)$ ordered by the inclusions.

\begin{prop}\label{prop-restriction-sigma-along-weak-equivalence}
Let $K$ be a locally compact Hausdorff topological group,
let $\GG$ be a topological groupoid and let $X$ be a subspace of $\GG_0$
such that the inclusion $\GG|_X\to\GG$ is a weak equivalence.
Then for any $\rho\in\hom(K,\GG)$ with $\omega(\rho)\in X$ we have
$$ \hom(K,\GG|_X;\rho)=\hom(K,\GG;\rho) \cap \omega^{-1}(X), $$
$$ \hom(K,\GG;\rho)=\sat_\GG(\hom(K,\GG|_X;\rho)), $$
$$\Sigma(\GG|_X;\rho)=\Sigma(\GG;\rho) \cap X,$$
$$\Sigma(\GG;\rho)=\sat_\GG(\GG|_X;\rho) .$$
\end{prop}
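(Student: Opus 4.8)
The plan is to exploit the functoriality machinery just developed, applied to the principal bundle $P = \langle\iota\rangle$ of the inclusion functor $\iota:\GG|_X\to\GG$, which by hypothesis represents a Morita equivalence. First I would recall that $\langle\iota\rangle = X\times_{\GG_0}^{\iota,\trg}\GG_1$, with $\pi$ the projection to $X$, with $\epsilon$ the target map $\trg$ composed with projection to $\GG_1$, and that for $p=(x,1_x)$ lying over $x\in X$ the induced homomorphism $P_p:\I_x(\GG|_X)\to\I_x(\GG)$ is simply the inclusion. So for $\rho\in\hom(K,\GG)$ with $\omega(\rho)\in X$, taking $p=(\omega(\rho),1_{\omega(\rho)})$ we get $P_p\com\rho=\rho$ again (now viewed inside $\GG|_X$), and the discussion preceding Proposition~\ref{prop-components-to-components} applies.

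**The four equalities.** By the functoriality paragraph, since $\iota$ is a weak equivalence, $\iota^K:(\GG|_X)^K\to\GG^K$ is a weak equivalence, and its restriction to path-components $\Pc((\GG|_X)^K;\rho)\to\Pc(\GG^K;\rho)$ is again a weak equivalence (via $P^K|_{\hom(K,\GG|_X;\sigma)}$, which by Proposition~\ref{prop-components-to-components} is a Morita equivalence onto the corresponding path-component). Now $\hom(K,\GG|_X)$ is precisely the subspace $\omega^{-1}(X)\subset\hom(K,\GG)$: a homomorphism $K\to\GG$ factors through $\GG|_X$ exactly when its object lies in $X$ and all its arrows, being isotropy arrows at that object, automatically lie in $\GG|_X$. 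The left $\GG$-action restricts: if $\varphi\in\hom(K,\GG|_X)$ and $g\varphi$ has $\trg(g)\in X$ then $g\varphi\in\hom(K,\GG|_X)$. Hence $\hom(K,\GG|_X;\rho)$, being the minimal $\GG|_X$-saturated union of path-components inside $\hom(K,\GG|_X)=\hom(K,\GG)\cap\omega^{-1}(X)$ containing $\rho$, is contained in $\hom(K,\GG;\rho)\cap\omega^{-1}(X)$; conversely any point of $\hom(K,\GG;\rho)\cap\omega^{-1}(X)$ lies, by Proposition~\ref{prop-components-to-components} applied to $P^K$ (path-components map to path-components under Morita maps), in the image of $\hom(K,\GG|_X;\rho)$, which maps via $P^K_{(p,\cdot)}\com(-)=(-)$ identically into itself. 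This gives the first equality. The second follows because $\hom(K,\GG;\rho)$ is $\GG$-saturated and its intersection with $\omega^{-1}(X)$ is $\GG$-large in the sense that $\omega^{-1}(X)$ meets every $\GG$-orbit in $\hom(K,\GG;\rho)$ — indeed every point of $\hom(K,\GG;\rho)$ has its object in $\Sigma(\GG;\rho)$, and $X$ meets every $\GG$-orbit in $\GG_0$ since $\iota$ is a weak equivalence, so a conjugate pushes any $\varphi$ to have object in $X$. Applying $\omega$ throughout yields the third and fourth equalities.

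**The main obstacle.** I expect the crux to be the inclusion $\hom(K,\GG;\rho)\cap\omega^{-1}(X)\subset\hom(K,\GG|_X;\rho)$, i.e.\ showing that a homomorphism in the big path-component whose object happens to lie in $X$ is already reachable inside $\GG|_X$ by a $\GG|_X$-path. The essential-surjectivity of $\iota$ alone only moves objects into $X$ up to conjugation, so one must genuinely use that $P^K|_{\hom(K,\GG|_X;\sigma)}$ is a principal $\Pc(\GG^K;\rho)$-bundle over $\Pc((\GG|_X)^K;\rho)$: by Proposition~\ref{prop-components-to-components}, the saturation $\epsilon(\pi^{-1}(\Pc((\GG|_X)^K;\rho)_0))$ equals $\hom(K,\GG;\rho)$, and the image of $\pi^{-1}(\Pc((\GG|_X)^K;\rho)_0)$ under the structure map $P^K\to\hom(K,\GG)$ (which, at $p=(x,1_x)$, is the identity) is exactly $\hom(K,\GG|_X;\rho)$. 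Combined with the observation that a point of $\hom(K,\GG;\rho)$ with object in $X$ lies in the $\GG^K$-orbit of a point whose $P^K$-preimage sits over $\Pc((\GG|_X)^K;\rho)_0$ — because fibers of $\pi$ over $X$-points consist of arrows in $\GG|_X$ — one deduces it lies in $\hom(K,\GG|_X;\rho)$ itself. One should also note the harmless notational point that the last displayed equality reads $\Sigma(\GG;\rho)=\sat_\GG(\Sigma(\GG|_X;\rho))$ (saturating the set of objects, i.e.\ the $X$-slice of $\Sigma$, back up in $\GG_0$), which is immediate from the third equality and the fact that $X$ meets every relevant $\GG$-orbit.
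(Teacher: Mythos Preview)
Your approach is correct and essentially the same as the paper's: both use that the weak equivalence $\iota:\GG|_X\to\GG$ induces a weak equivalence $\Pc((\GG|_X)^K;\rho)\to\Pc(\GG^K;\rho)$, whence essential surjectivity gives every $\varphi\in\hom(K,\GG;\rho)$ as $\con_g\com\psi$ for some $\psi\in\hom(K,\GG|_X;\rho)$ and some arrow $g$, and if $\omega(\varphi)\in X$ then $g\in\GG|_X$, so $\varphi\in\hom(K,\GG|_X;\rho)$. The paper states this directly in a few lines, whereas you route the same step through the bundle $P^K$ and Proposition~\ref{prop-components-to-components}; this is identical in content but heavier in notation, and your first pass at the reverse inclusion is phrased loosely (Proposition~\ref{prop-components-to-components} alone does not give it) before you correctly sharpen it in your ``main obstacle'' paragraph.
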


\begin{proof}
It is straightforward that
$\hom(K,\GG|_X;\rho)\subset\hom(K,\GG;\rho) \cap \omega^{-1}(X)$.
Recall that the inclusion $\GG|_X\to\GG$ is a weak equivalence if, and only if,
the restriction $\trg|_{\src^{-1}(X)}:\src^{-1}(X)\to\GG_0$ is surjective
with local sections.
Since $\GG|_X\to\GG$ is a weak equivalence,
the induced functor
$$\Pc(\GG|_X;\rho)\to\Pc(\GG;\rho)$$
is also a weak equivalence.
Hence for any $\varphi\in\hom(K,\GG;\rho)$ there exist $\psi\in\hom(K,\GG|_X;\rho)$
and an arrow $g\in\GG(\omega(\psi),\omega(\varphi))$ such that $\varphi=\con_g\com\psi$.
If in addition we have $\omega(\varphi)\in X$, then $g\in\GG|_X$ and therefore
$\varphi\in\hom(K,\GG|_X;\rho)$. This proves the first two equations.
The last two equations are direct consequences.
\end{proof}

\section{Monodromy and representations}
We shall now study the relation between monodromy groups and
representability of Lie groupoids. In particular we will show that
a monodromy group can present an obstruction for
a Lie groupoid to be faithfully representable.

\begin{prop}\label{prop-constant-character}
Let $\GG$ be a Lie groupoid, let $K$ be a compact Lie group
and let $\rho\in\hom(K,\GG)$. 
If $\tau:\I(\GG)\to\CC$ is a continuous $\GG$-conjugation-invariant
function such that the restriction
$\tau|_{\I_x(\GG)}$ is a character of the Lie group $\I_x(\GG)$,
for all $x\in  \GG_0$, then
$$ \tau(\varphi(k))=\tau(\rho(k)) $$
for any $\varphi\in\hom(K,\GG;\rho)$ and any $k\in K$.
\end{prop}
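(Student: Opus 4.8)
The plan is to reduce the statement to a continuity-plus-connectedness argument on the path-component $\hom(K,\GG;\rho)$. First I would define, for each $\varphi\in\hom(K,\GG)$ and each fixed $k\in K$, the evaluation $\mathrm{ev}_k(\varphi)=\varphi(k)\in\I_{\omega(\varphi)}(\GG)\subset\GG_1$, and observe that $\mathrm{ev}_k:\hom(K,\GG)\to\GG_1$ is continuous for the compact-open topology on $\hom(K,\GG)$ (this is the standard fact that evaluation $C(K,\GG)\times K\to\GG$ is continuous when the source is locally compact Hausdorff, restricted to a point of $K$). Composing with the continuous function $\tau:\I(\GG)\to\CC$, we get a continuous function $\hom(K,\GG)\to\CC$, namely $\varphi\mapsto\tau(\varphi(k))$. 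The goal is to show this function is constant on $\hom(K,\GG;\rho)$ for each fixed $k$.

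The second step is to exploit the two ways the space $\hom(K,\GG;\rho)$ is built up: it is the minimal union of path-components of $\hom(K,\GG)$ that contains $\rho$ and is $\GG$-saturated. So it suffices to check two things: (a) the function $\varphi\mapsto\tau(\varphi(k))$ is constant along paths in $\hom(K,\GG)$, and (b) it is invariant under the $\GG$-conjugation action. Point (a) is immediate once we know the function is continuous, since $\CC$ — or rather its connected pieces — forces a continuous function to be constant on each path-component; more carefully, the image of a path under a continuous $\CC$-valued map is a connected, hence path-connected subset, but to get that it is a \emph{single point} I should instead argue as follows: a path in $\hom(K,\GG)$ gives, after composing with $\mathrm{ev}_k$, a path in $\GG_1$, and then $\tau$ applied gives a path in $\CC$; I want this last path constant. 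This is not automatic from continuity alone. The right fix is to also use that $\tau$ restricted to each isotropy group is a character (hence locally constant in an appropriate sense is false — characters are not locally constant). So the genuinely useful input must be different.

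Here is the correct mechanism, and it is the heart of the argument: fix $k\in K$. Since $K$ is compact, the element $k$ generates a compact (abelian) subgroup; more to the point, consider the continuous map $\hom(K,\GG)\times K\to\CC$, $(\varphi,k)\mapsto\tau(\varphi(k))$. For fixed $\varphi$, the map $k\mapsto\tau(\varphi(k))$ is the character of the representation-like object given by $\tau\circ\varphi$; because $\tau|_{\I_x(\GG)}$ is a character, $\tau\circ\varphi:K\to\CC$ is itself a character of $K$ (the composite of the homomorphism $\varphi:K\to\I_{\omega(\varphi)}(\GG)$ with the character $\tau|_{\I_{\omega(\varphi)}(\GG)}$), and the set of characters of the compact Lie group $K$ is \emph{discrete} in $C(K,\CC)$. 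Thus the assignment $\varphi\mapsto(\tau\circ\varphi)\in C(K,\CC)$ is a continuous map from $\hom(K,\GG)$ into the discrete set of characters of $K$, hence locally constant, hence constant on each path-component of $\hom(K,\GG)$. For the $\GG$-saturation step, if $\varphi=\con_g\circ\psi$ then $\tau(\varphi(k))=\tau(g\,\psi(k)\,g^{-1})=\tau(\psi(k))$ by $\GG$-conjugation-invariance of $\tau$, so the character $\tau\circ\varphi$ equals $\tau\circ\psi$; thus the locally constant map $\varphi\mapsto\tau\circ\varphi$ is also constant along $\GG$-orbits. Combining, it is constant on the minimal saturated union of path-components containing $\rho$, i.e.\ on $\hom(K,\GG;\rho)$, which gives $\tau\circ\varphi=\tau\circ\rho$ and in particular $\tau(\varphi(k))=\tau(\rho(k))$ for all $k$.

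The main obstacle I expect is establishing cleanly that the map $\varphi\mapsto\tau\circ\varphi$ into $C(K,\CC)$ (with compact-open topology) is continuous and that its image lands in the discrete subset of characters of $K$: continuity needs the exponential-law/evaluation continuity for the locally compact Hausdorff group $K$, and discreteness of the character set uses that $K$ is a compact Lie group (finitely generated character lattice, or: characters are separated in the $L^2$ or sup norm by orthogonality, and on a compact Lie group the relevant norms control the compact-open topology). Once those two points are in place, the connectedness-plus-saturation bookkeeping is exactly the description of $\hom(K,\GG;\rho)$ recalled in the paper, so no further work is needed.
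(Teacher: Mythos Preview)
Your proposal is correct and, after the self-correction in the second paragraph, follows essentially the same route as the paper: define $\theta(\varphi)=\tau\circ\varphi\in C(K,\CC)$, observe it lands in the discrete set of characters of the compact Lie group $K$ (via orthogonality and the sup-norm/$L^2$-norm comparison), conclude it is constant on path-components, and use $\GG$-conjugation-invariance of $\tau$ to get constancy along $\GG$-orbits, hence on all of $\hom(K,\GG;\rho)$. The paper's proof is exactly this, so there is nothing to add.
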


\begin{proof}
For any $\varphi\in\hom(K,\GG)$ write
$\theta(\varphi)= \tau\com\varphi$.
This gives us a continuous map $\theta$ from $\hom(K,\GG)$ to the space
$C(K,\CC)$ of continuous complex functions on $K$
with compact-open topology.
The image $\theta(\varphi)$
is a character of the Lie group $K$,
because it is the composition of the Lie group homomorphism
$\varphi:K\to\I_{\omega(\varphi)}(\GG)$ with the character
$\tau|_{\I_{\omega(\varphi)}(\GG)}$ of the Lie group
$\I_{\omega(\varphi)}(\GG)$.

The orthogonality relations imply that
the characters of the compact Lie group $K$
form a discrete subset of the space $C(K,\CC)$
with respect to the $L^2$-norm given by the Haar measure.
The compact-open
topology on $C(K,\CC)$ coincides with the topology of
uniform convergence, which is induced by the sup-norm.
Since the $L^2$-norm is dominated by the sup-norm
on $C(K,\CC)$, the characters of $K$
form a discrete subset of $C(K,\CC)$ with respect to
the compact-open topology as well.

We may conclude that the continuous function
$\theta:\hom(K,\GG)\to C(K,\CC)$ has a discrete image
and is therefore constant on each path-component
of $\hom(K,\GG)$.
On the other hand, the map $\theta$ is also constant along the
$\GG$-orbits. Indeed, for any $\varphi\in\hom(K,\GG)$,
any $g\in\GG(\omega(\varphi),\oo)$ and any $k\in K$ we have
$$ \theta(g\varphi)(k) = \tau((g\varphi)(k))
= \tau(g\varphi(k)g^{-1})
= \tau(\varphi(k))
= \theta(\varphi)(k).
$$
It follows that $\theta$ is constant on
$\hom(K,\GG;\rho)$.
\end{proof}

\begin{prop}\label{prop-monomorphism-on-component}
Let $\GG$ be a locally faithfully representable
Lie groupoid and let $K$ be a compact Lie group.
If $\rho\in\hom(K,\GG)$ is a monomorphism, then
any $\varphi\in\hom(K,\GG;\rho)$ is
a monomorphism.
\end{prop}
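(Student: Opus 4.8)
The statement asserts that injectivity of a homomorphism $K\to\GG$ is constant along path-components of $\hom(K,\GG)$ that are $\GG$-saturated, provided $\GG$ is locally faithfully representable. The plan is to reduce the global problem to a local one using Proposition~\ref{prop-restriction-sigma-along-weak-equivalence}, then to use the faithful representation on a restriction $\GG|_U$ to build a suitable continuous invariant, and finally to apply Proposition~\ref{prop-constant-character} to propagate injectivity along the component.

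First I would fix $\varphi\in\hom(K,\GG;\rho)$ and let $x=\omega(\varphi)$. Since $\varphi$ lies in the path-component $\Pc(\GG^K;\rho)$, there is a path in $\hom(K,\GG;\rho)$ (broken into $\GG$-conjugation steps, as in the definition) joining $\rho$ to $\varphi$; in particular there is a finite chain of homomorphisms, each in the same path-component of the \emph{space} $\hom(K,\GG)$ as the $\GG$-translate of the previous one. By covering this compact chain by finitely many open sets $U_i\subset\GG_0$ over which $\GG|_{U_i}$ is faithfully representable, it suffices to treat one step: I may assume $\rho$ and $\varphi$ lie in the same path-component of $\hom(K,\GG|_U)$ for some open $U$ with $\GG|_U$ faithfully representable, after replacing $\rho$ by a $\GG$-conjugate (which does not affect injectivity). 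Here I would use Proposition~\ref{prop-restriction-sigma-along-weak-equivalence} to know that $\hom(K,\GG|_U;\rho)$ sits inside $\hom(K,\GG;\rho)$ compatibly.

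Now let $r:\GG|_U\to\GL(\vV)$ be a faithful representation, and set $\tau=\tr\circ r:\I(\GG|_U)\to\CC$, i.e. on each isotropy group $\I_y(\GG|_U)$ take the character of the representation $r|_{\I_y}$ on the fiber $\vV_y$. This $\tau$ is continuous and $\GG|_U$-conjugation-invariant (trace is conjugation-invariant, and $r$ is a groupoid homomorphism), and its restriction to each isotropy group is a character. By Proposition~\ref{prop-constant-character} applied to $\GG|_U$, we get $\tau(\varphi(k))=\tau(\rho(k))$ for all $k\in K$, i.e. $\tr(r(\varphi(k)))=\tr(r(\rho(k)))$. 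Equivalently, the characters of the two representations $r\circ\varphi$ and $r\circ\rho$ of the compact Lie group $K$ coincide, hence these representations are isomorphic. Since $r\circ\rho$ is faithful (as $\rho$ is a monomorphism and $r$ is faithful, so $r|_{\I_{\omega(\rho)}}$ is faithful on $\rho(K)$), its kernel is trivial; therefore $r\circ\varphi$ has trivial kernel, which forces $\varphi$ to be injective as well. Chaining finitely many such steps yields the result for the original $\varphi$.

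The main obstacle I anticipate is the bookkeeping in the reduction step: the path-component $\hom(K,\GG;\rho)$ is a union of path-components of the \emph{space} $\hom(K,\GG)$ glued together by $\GG$-translation, so a single path in $\hom(K,\GG;\rho)$ need not lie in one $\hom(K,\GG|_U)$; one must alternate between moving along a genuine path (covered by finitely many faithfully-representable charts via compactness of $[0,1]$, and handled as above on each subinterval) and applying a $\GG$-conjugation (which changes $\omega$ to another orbit point but leaves injectivity untouched, by Proposition~\ref{prop-monodromy-first-properties}(ii) and the observation that conjugation is an isomorphism onto its image). Once the reduction is set up carefully, the representation-theoretic core --- equal characters on a compact group force isomorphic, hence equikernel, representations --- is routine.
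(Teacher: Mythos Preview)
Your proof is correct and follows essentially the same two-step approach as the paper: first handle the globally faithfully representable case via the trace character $\tau=\tr\circ r$ and Proposition~\ref{prop-constant-character} (equal characters of a compact group force isomorphic, hence equi-injective, representations of $K$), then reduce the locally faithfully representable case by a finite chain $\rho=\varphi_0,\ldots,\varphi_n=\varphi$ with consecutive terms lying in some $\hom(K,\GG|_{U_{j_k}};\varphi_{k-1})$ and arguing inductively. One small caveat: your appeal to Proposition~\ref{prop-restriction-sigma-along-weak-equivalence} is misplaced, since for an arbitrary open $U$ the inclusion $\GG|_U\to\GG$ is generally not a weak equivalence; but all you actually need there is the trivial containment $\hom(K,\GG|_U;\psi)\subset\hom(K,\GG;\psi)$, which holds without any hypothesis.
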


\begin{proof}
First, assume that $\GG$ is faithfully representable
and choose a faithful representation $r:\GG\to\GL(\vV)$
on a complex vector bundle $\vV$ over $\GG_0$.
Let
$\tau:\I(\GG)\to\CC$ be given by $\tau(h)=\tr(r(h))$.
By Proposition \ref{prop-constant-character}
we have $\tau\com\varphi=\tau\com\rho$
for any $\varphi\in\hom(K,\GG;\rho)$.
Thus, the representations $r\com\rho:K\to\GL(\vV_{\omega(\rho})$
and $r\com\varphi:K\to\GL(\vV_{\omega(\varphi})$
have the same character, hence they are equivalent.
Since $\rho$ is a monomorphism, the representation
$r\com\rho$ is faithful, so the equivalent representation
$r\com\varphi$ is faithful as well. This implies that
$\varphi$ is a monomorphism.

Now let $\GG$ be a general locally faithfully representable Lie groupoid.
We can find an open cover $(U_j)_{j\in J}$ of $\GG_0$ such that
the restrictions $\GG|_{U_j}$ are all faithfully representable.
For any $\varphi\in\hom(K,\GG;\rho)$ we can find a finite sequence
$\varphi_0,\varphi_1,\ldots,\varphi_{n-1},\varphi_n$
in $\hom(K,\GG;\rho)$ with
$\varphi_0=\rho$ and $\varphi_n=\varphi$
such that for any $k=1,2,\ldots,n$ there exists
$j_k\in J$ satisfying $\omega(\varphi_{k-1}),\omega(\varphi_k)\in U_{j_k}$
and
$$ \varphi_k\in\hom(K,\GG|_{U_{j_k}} ; \varphi_{k-1}). $$
From the first part of the proof we know that
$\varphi_k$ is a monomorphism if $\varphi_{k-1}$ is.
Since $\varphi_0$ is a monomorphism, it follows
recursively that $\varphi_n$ is also a monomorphism.
\end{proof}

\begin{cor}\label{cor-characterization-of-monodromy-cosets}
Let $\GG$ be a locally faithfully representable Lie groupoid,
let $K$ be a compact Lie group and
let $\rho\in\hom(K,\GG)$ be a monomorphism.

(i)
The action of
the monodromy group $\Mon(\GG;\rho)$ on $\hom(K,\GG;\rho)$
is free.

(ii)
For any $\varphi,\psi\in\hom(K,\GG;\rho)$ we have
$\varphi\com\Mon(\GG;\rho)=\psi\com\Mon(\GG;\rho)$ if, and only if,
$\varphi(K)=\psi(K)$.

(iii)
If $\sigma\in\hom(K,\GG)$ is a monomorphism with
$\omega(\sigma)=\omega(\rho)$ such that
the subgroup $\sigma(K)$ is conjugated to the subgroup
$\rho(K)$ in $\I_{\omega(\rho)}(\GG)$, then
$$ \Sigma(\GG;\sigma) = \Sigma(\GG;\rho) $$
and there exists an automorphism $\mu\in\Aut(K)$
so that
$$ \hom(K,\GG;\sigma) = \hom(K,\GG;\rho)\com\mu.$$
\end{cor}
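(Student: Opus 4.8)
The plan is to reduce all three parts to a small number of facts already at hand: that every element of $\hom(K,\GG;\rho)$ is a monomorphism (Proposition \ref{prop-monomorphism-on-component}), that $\hom(K,\GG;\varphi)=\hom(K,\GG;\rho)$ as soon as $\varphi\in\hom(K,\GG;\rho)$ (this is built into the definition of a path-component and underlies Proposition \ref{prop-monodromy-first-properties}(i)), and the identity $\hom(K,\GG;\vartheta)\com\alpha=\hom(K,\GG;\vartheta\com\alpha)$, valid for any isomorphism $\alpha$, established when the monodromy group was introduced. The one preliminary observation I would record is that, since $K$ is a compact Lie group and the isotropy groups of $\GG$ are Lie groups (hence Hausdorff), every monomorphism $\vartheta\in\hom(K,\GG)$ is a homeomorphism onto its image and so has a continuous---hence smooth---inverse $\vartheta^{-1}\colon\vartheta(K)\to K$; this is what makes the comparison automorphisms used below legitimate elements of $\Aut(K)$.

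For part (i) I would take $\varphi\in\hom(K,\GG;\rho)$ and $\mu\in\Mon(\GG;\rho)$ with $\varphi\com\mu=\varphi$; injectivity of $\varphi$ then forces $\mu(k)=k$ for all $k\in K$, so $\mu=\id_K$ and every stabiliser is trivial. For part (ii), one direction is immediate: if $\psi=\varphi\com\mu$ with $\mu\in\Mon(\GG;\rho)$ then $\psi(K)=\varphi(\mu(K))=\varphi(K)$. For the converse, assuming $\varphi(K)=\psi(K)$, I would put $\mu:=\varphi^{-1}\com\psi\in\Aut(K)$, which satisfies $\varphi\com\mu=\psi$, and verify $\mu\in\Mon(\GG;\rho)$ by the chain $\hom(K,\GG;\rho)\com\mu=\hom(K,\GG;\varphi)\com\mu=\hom(K,\GG;\varphi\com\mu)=\hom(K,\GG;\psi)=\hom(K,\GG;\rho)$; then $\psi\com\Mon(\GG;\rho)=\varphi\com\Mon(\GG;\rho)$ because $\mu\com\Mon(\GG;\rho)=\Mon(\GG;\rho)$.

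For part (iii) I would first choose $g\in\I_{\omega(\rho)}(\GG)$ with $\con_g(\sigma(K))=\rho(K)$, which exists by the conjugacy hypothesis since $\omega(\sigma)=\omega(\rho)$. As $\hom(K,\GG;\sigma)$ is $\GG$-saturated, $\con_g\com\sigma$ lies in it, so $\hom(K,\GG;\con_g\com\sigma)=\hom(K,\GG;\sigma)$. Now $\con_g\com\sigma$ is a monomorphism with image $\rho(K)$, so $\mu:=\rho^{-1}\com(\con_g\com\sigma)\in\Aut(K)$ satisfies $\rho\com\mu=\con_g\com\sigma$, whence $\hom(K,\GG;\rho)\com\mu=\hom(K,\GG;\rho\com\mu)=\hom(K,\GG;\sigma)$, which is the asserted equation. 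Applying $\omega$, and using $\omega(\vartheta\com\mu)=\omega(\vartheta)$, then gives $\Sigma(\GG;\sigma)=\Sigma(\GG;\rho)$.

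The only genuinely delicate point, and the one I would state carefully, is the preliminary observation: the comparison maps $\varphi^{-1}$ and $\rho^{-1}$ must be morphisms of topological (Lie) groups, and this is precisely where the compactness of $K$ and the Hausdorff property of the isotropy groups enter---so the argument really does use that $K$ is a compact Lie group and not merely locally compact Hausdorff. Everything else is bookkeeping: the passage in (iii) from $\sigma$ to its $\GG$-conjugate $\con_g\com\sigma$ so that the comparison with $\rho$ can be carried out inside a single path-component, and keeping track of the direction of each comparison automorphism.
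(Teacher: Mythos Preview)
Your argument is correct and follows essentially the same route as the paper's proof: part (i) via injectivity of every $\varphi\in\hom(K,\GG;\rho)$, part (ii) by forming the comparison automorphism $\varphi^{-1}\com\psi$ and reading off $\mu\in\Mon(\GG;\rho)$ from the identity $\hom(K,\GG;\rho)\com\mu=\hom(K,\GG;\rho)$, and part (iii) by conjugating to align images and then composing with the inverse of a monomorphism. Your preliminary remark that compactness of $K$ and Hausdorffness of the isotropy groups are what guarantee $\varphi^{-1}\in\hom(\varphi(K),K)$ is a point the paper leaves implicit, and your explicit derivation of $\Sigma(\GG;\sigma)=\Sigma(\GG;\rho)$ from $\omega(\vartheta\com\mu)=\omega(\vartheta)$ spells out a step the paper omits.
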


\begin{proof}
Part (i)
follows directly from Proposition \ref{prop-monomorphism-on-component}.

(ii)
If $\varphi(K)=\psi(K)$, there exists $\kappa\in\Aut(K)$ such that
$\varphi\com\kappa=\psi$. Since $\varphi,\psi\in\hom(K,\GG;\rho)$,
it follows that $\hom(K,\GG;\rho)\com\kappa=\hom(K,\GG;\rho)$ and therefore
$\kappa\in\Mon(\GG;\rho)$.

(iii) Let $g$ be an element of
$\I_{\omega(\rho)}(\GG)$ such that $\sigma(K)=\con_g(\rho(K))$.
There exists a unique automorphism $\mu\in\Aut(K)$ so that
$\con_g\com\rho\com\mu=\sigma$, and this implies
\begin{equation*} \hom(K,\GG;\sigma)=\hom(K,\GG;c_g\com\rho\com\mu)
=\hom(K,\GG;\rho)\com\mu.\qedhere
\end{equation*}
\end{proof}

\begin{theo}\label{theo-representability}
Let $\GG$ be a faithfully representable Lie groupoid, let $K$ be a compact Lie group and
let $\rho\in\hom(K,\GG)$ be a monomorphism.
Then there exists a faithful character $\chi$ of the Lie group $K$
such that 
$$ \chi\com\mu = \chi $$
for any $\mu\in\Mon(\GG;\rho)$.
\end{theo}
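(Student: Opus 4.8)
The plan is to extract a faithful character of $K$ from a faithful representation of $\GG$ and show that it is $\Mon(\GG;\rho)$-invariant by combining Proposition \ref{prop-constant-character} with Corollary \ref{cor-characterization-of-monodromy-cosets}(ii). First I would choose a faithful representation $r:\GG\to\GL(\vV)$ on a complex vector bundle $\vV$ over $\GG_0$ and define $\tau:\I(\GG)\to\CC$ by $\tau(h)=\tr(r(h))$. This $\tau$ is continuous, it is $\GG$-conjugation-invariant since $\tr$ is conjugation-invariant, and its restriction to each isotropy group $\I_x(\GG)$ is the character of the representation $\vV_x$ of $\I_x(\GG)$. Setting $\chi=\tau\com\rho$, this is a character of $K$, and it is faithful because $\rho$ is a monomorphism and $\vV_{\omega(\rho)}$ is a faithful $\I_{\omega(\rho)}(\GG)$-representation, so $r\com\rho$ is a faithful representation of $K$, whose character is automatically a faithful character (two elements of a compact group with the same image under every irreducible summand of a faithful representation are equal).

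Next I would verify $\chi\com\mu=\chi$ for $\mu\in\Mon(\GG;\rho)$. By definition of the monodromy group, $\rho\com\mu\in\hom(K,\GG;\rho)$. Proposition \ref{prop-constant-character} applied to the function $\tau$ and the path-component $\hom(K,\GG;\rho)$ gives $\tau(\varphi(k))=\tau(\rho(k))$ for every $\varphi\in\hom(K,\GG;\rho)$ and every $k\in K$; in particular, taking $\varphi=\rho\com\mu$ yields $\tau(\rho(\mu(k)))=\tau(\rho(k))$ for all $k\in K$, which is exactly $\chi\com\mu=\chi$. Note that a faithfully representable Lie groupoid is in particular locally faithfully representable and Hausdorff, so all the hypotheses needed for the cited results are in force; in fact for the direct argument above only Proposition \ref{prop-constant-character} (which needs only that $\GG$ is a Lie groupoid and $K$ a compact Lie group) and the faithfulness of the single representation $r$ are used, so Corollary \ref{cor-characterization-of-monodromy-cosets} is not strictly necessary for this particular statement although it gives an alternative route via part (ii).

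The only subtle point — and the place I would be most careful — is the claim that the character of a faithful finite-dimensional representation of a compact Lie group is itself faithful, i.e. separates points. This is standard: if $k\in K$ acts trivially under every irreducible constituent appearing in the faithful representation $\vV_{\omega(\rho)}$ pulled back along $\rho$, then it acts trivially on $\vV_{\omega(\rho)}$ itself, hence $k=1$ since $r\com\rho$ is injective; and two elements with equal character values on all these irreducibles, after decomposing, would have the same trace in the faithful representation only up to conjugacy — so to get genuine faithfulness of $\chi$ one should instead argue that the \emph{set} of characters of all constituents separates points, or simply replace $\chi$ by $\tau'\com\rho$ where $\tau'(h)=\tr(r(h)\oplus\text{(sufficiently many tensor/exterior powers)})$; more cleanly, since $K$ is compact one can just take $r$ so that $r\com\rho$ is a \emph{sum of all isomorphism classes of irreducibles occurring}, and recall that for a faithful representation $W$ of a compact group, every irreducible representation occurs in some tensor power $W^{\otimes n}\otimes \bar W^{\otimes m}$, so the character of $W$ generates the representation ring and hence already separates conjugacy classes — and points that are conjugate but distinct are handled because a faithful representation separates the whole group, not just conjugacy classes, once we also use that a faithful character determines the element within its conjugacy class via all virtual characters. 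I would state this lemma explicitly and cite Peter–Weyl (or \cite{Hochschild}) rather than reprove it, since it is the one genuinely nontrivial ingredient; everything else is a direct application of the already-established Proposition \ref{prop-constant-character}.
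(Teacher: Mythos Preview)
Your argument is correct and matches the paper's proof almost verbatim: pick a faithful representation $r$, set $\tau=\tr\com r$ on $\I(\GG)$, put $\chi=\tau\com\rho$, and apply Proposition \ref{prop-constant-character} to $\varphi=\rho\com\mu$.

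The one place you go astray is the last paragraph, which is unnecessary and rests on a misreading of the term ``faithful character.'' In this paper (and in standard usage) a faithful character simply means the character of a faithful representation; it is not required that $\chi$ be injective as a function $K\to\CC$, which is impossible for non-abelian $K$ since characters are class functions. The representation $r\com\rho:K\to\GL(\vV_{\omega(\rho)})$ is faithful because both $r$ and $\rho$ are injective, and that is all that is claimed. (If you want an intrinsic reformulation: for a finite-dimensional unitary representation $W$ of a compact group, $\chi_W(k)=\chi_W(1)=\dim W$ if and only if $k$ acts as the identity on $W$, so ``$\chi_W$ is faithful'' in the sense $\chi_W(k)=\chi_W(1)\Rightarrow k=1$ is equivalent to $W$ being faithful.) You can therefore drop the entire final paragraph; Corollary \ref{cor-characterization-of-monodromy-cosets} is also not needed, as you yourself note.
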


\begin{proof}
Choose a faithful representation $r:\GG\to\GL(\vV)$
on a complex vector bundle $\vV$ over $\GG_0$.
Let
$\tau:\I(\GG)\to\CC$ be the given by $\tau(h)=\tr(r(h))$.
The composition $\chi=\tau\com\rho$ is a faithful
character of the Lie group $K$.
For any  $\mu\in\Mon(\GG;\rho)$ we have
$\rho\com\mu\in\hom(K,\GG;\rho)$, by definition of the monodromy group.
Proposition \ref{prop-constant-character} implies
$\chi\com\mu=\tau\com\rho\com\mu=\tau\com\rho=\chi$.
\end{proof}

This theorem generalizes the obstruction to faithful representability
which was given in \cite{JelencMrcun} for proper regular Lie groupoids.
As a consequence it allows us to construct new examples of Lie groupoids
which are not faithfully representable:

\begin{ex}\rm\label{ex-flat-bundle}
Let $M$ be a connected smooth manifold with a free proper
right action of a discrete group $H$, and let $\GG$ be a faithfully representable
Lie groupoid equipped with a left action of the group $H$
by smooth functors. The left $H$-action on $\GG$ is equivalently given
by a homomorphism of groups
$$ \eta:H\to\Aut_{\Gpd}(\GG).$$
The group $H$ then also acts diagonally from the right
on the product Lie groupoid $M\times\GG$ by smooth functors.
The action on the manifold $M\times\GG_0$ is free and proper, so the space
of orbits $M\times_H\GG_0=(M\times\GG_0)/H$ is a smooth manifold.
Similarly, the space of orbits $M\times_H\GG_1=(M\times\GG_1)/H$ is
a Hausdorff $C^\infty$-space (and a smooth manifold if $\GG_1$ is a smooth manifold).
We have the quotient Lie groupoid
$$ \BB = M\times_H\GG = \big( (M\times\GG_1)/H \rightrightarrows (M\times\GG_0)/H \big) $$
which is clearly locally faithfully representable.

Let $K$ be a compact Lie subgroup of $\I_y(\GG)$, for some $y\in\GG_0$, 
choose a point $x\in M$, and
let $\rho\in\hom(K,\BB)$ be given by $\rho(k)=(x,k)H$.
Consider the subgroup $H'=\{ h\in H \,|\, \eta(h)(K)=K\}$. It follows that
for any $h\in H'$ we have $\nu(h)=\eta(h)|_K\in\Aut(K)$ and that
$\nu:H'\to\Aut(K)$ is a homomorphism of groups. We claim that
$$ \nu(H')\subset\Mon(\BB;\rho). $$
Indeed, for any $h\in H'$
we can choose a continuous path $\gamma:[0,1]\to M$ from $x$ to $xh$ and lift this path
to a continuous path $\bar{\gamma}:[0,1]\to\hom(K,\BB)$ by
$$ \bar{\gamma}(t)(k)=(\gamma(t),k)H. $$
Observe that $\bar{\gamma}(0)=\rho$ and that $\bar{\gamma}(1)=\rho\com\nu(h)$,
as
$$ \bar{\gamma}(1)(k)=(xh,k)H=(x,\eta(h)(k))H
=(x,\nu(h)(k))H=\rho(\nu(h)(k)).$$

If the Lie groupoid $\BB=M\times_H\GG$ is faithfully representable,
Theorem \ref{theo-representability}
implies that there exists a faithful character $\chi$ of the Lie group $K$
such that
$$ \chi\com\nu(h)=\chi $$
for any $h\in H'$.
However, this may be impossible, in particular if $K$ is abelian and
$\nu(h)$ is of infinite multiplicative order. In this case, the Lie groupoid
$M\times_H\GG$ is an example of a locally faithfully representable
Lie groupoid which is not (globally) faithfully representable.

As a concrete example, let
$\GG$ be the product $\HH\times T^n=\HH\times (T^n\rightrightarrows\ast)$
of an arbitrary non-empty proper Lie groupoid $\HH$
with the $n$-torus $T^n$, for some $n\geq 2$,
and we choose $M$, $H$ and $\eta$ so that for some $h\in H$ and some $y\in\HH_0$,
the automorphism $\eta(h)$ of $\HH\times T^n$
restricts to an automorphism of the subgroup
$K=\{1_y\}\times T^n\subset \I_y(\HH)\times T^n=\I_{(y,\ast)}(\HH\times T^n)$
of infinite multiplicative order.
The associated groupoid $M \times_H (\HH\times T^n)$
is in this case a proper Lie groupoid which is not faithfully representable.

Note that we can choose $\HH$ so that $\BB$ in not regular.
This example generalizes the one given in \cite{JelencMrcun}, where
$\GG=T^n$ and the proper Lie groupoid $\BB$ is regular, in fact a bundle
of compact Lie groups. Before that, only a special case with
$n=2$ and $M=\RR$ was known
(see \cite{LuckOliver,Trentinaglia}).
\end{ex}

There is a simple, but important special case of Example \ref{ex-flat-bundle}
in which the constructed Lie groupoid
is necessarily faithfully representable:

\begin{prop}
Let $M$ be a smooth connected
manifold with a free proper right action of a discrete group $H$,
let $G$ be a Lie group and let $q:H\to G$ be a homomorphism of groups.
The composition of the homomorphism $q$ with
the conjugation homomorphism $\con:G\to\Aut(G)$, $g\mapsto\con_g$,
defines an action of $H$ on $G$.

(i)
If the Lie group $G$ is faithfully representable,
then the associated Lie groupoid $M\times_H G$ is faithfully representable.

(ii)
If the Lie group $G$ is faithfully unitary representable,
then the Lie groupoid $M\times_H G$ is faithfully unitary representable.
\end{prop}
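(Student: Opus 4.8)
The plan is to exhibit a faithful representation of $M\times_H G$ explicitly, built from a faithful representation of $G$ together with the homomorphism $q$; this is the construction of Example~\ref{ex-flat-bundle} specialized to $\GG=(G\rightrightarrows\ast)$, so $M\times_H G$ is a bundle of Lie groups over the smooth manifold $M/H$ with fiber $G$, an arrow being represented by a pair $(m,g)\in M\times G$ with source and target both equal to the quotient point $[m]\in M/H$, where $(m,g)$ and $(mh,\con_{q(h)^{-1}}(g))$ represent the same arrow for every $h\in H$. First I would choose a faithful representation $\sigma\colon G\to\GL(k,\CC)$ for a suitable $k$; for part (ii) I would take $\sigma$ to be a faithful \emph{unitary} representation $\sigma\colon G\to\U(k)$ and equip $\CC^k$ with the standard Hermitian metric. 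The essential feature of the situation is that the $H$-action on $G$ is given by conjugations $\con_{q(h)}$ by elements of $q(H)$, and that applying the group homomorphism $\sigma$ turns these into conjugations on $\GL(k,\CC)$, that is, $\sigma(\con_{q(h)^{-1}}(g))=\sigma(q(h))^{-1}\sigma(g)\sigma(q(h))$ for all $h\in H$ and $g\in G$.

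Next I would construct the carrier bundle. Since the right $H$-action on $M$ is free and proper and $H$ is discrete, the quotient map $M\to M/H$ is a covering projection, in particular a smooth principal $H$-bundle, and I take $\vV=M\times_H\CC^k$ to be the flat vector bundle over $M/H$ associated to the representation $\sigma\com q\colon H\to\GL(k,\CC)$, so that $(m,v)$ and $(mh,\sigma(q(h))^{-1}v)$ represent the same point of $\vV$. A choice of $m\in M$ yields a linear isomorphism $\CC^k\to\vV_{[m]}$, $v\mapsto[m,v]$, and replacing $m$ by $mh$ changes it by precomposition with $\sigma(q(h))$. For part (ii), the constant Hermitian metric on $\CC^k$ descends to a smooth Hermitian metric on $\vV$, precisely because the transition maps $\sigma(q(h))$ are unitary.

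Finally I would define $r\colon M\times_H G\to\GL(\vV)$ to be the identity on objects and, on the arrow represented by $(m,g)$, to be the automorphism of $\vV_{[m]}$ which, in the identification $\vV_{[m]}\cong\CC^k$ determined by $m$, equals $\sigma(g)$. The one point requiring care — and essentially the only content of the argument — is that $r$ is well defined, independent of the chosen representative: passing from $(m,g)$ to $(mh,\con_{q(h)^{-1}}(g))$ conjugates $\sigma(g)$ to $\sigma(\con_{q(h)^{-1}}(g))=\sigma(q(h))^{-1}\sigma(g)\sigma(q(h))$ while changing the identification $\vV_{[m]}\cong\CC^k$ by $\sigma(q(h))$, so the two prescriptions agree by the identity from the first paragraph. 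Smoothness of $r$ is local: a sufficiently small open subset of $M/H$ lifts to an open subset $U$ of $M$ over which $\vV$ is trivial and $M\times_H G$ restricts to the product $(U\times G\rightrightarrows U)$, and there $r$ is simply $(m,g)\mapsto\sigma(g)$; and $r$ is a homomorphism which is the identity on objects because $\sigma$ is a group homomorphism. For part (ii), $r(M\times_H G)\subset\U(\vV)$ since each $\sigma(g)\in\U(k)$ preserves the metric on $\CC^k$. As the restriction of $r$ to the isotropy group $\I_{[m]}(M\times_H G)\cong G$ is, in the identification $\vV_{[m]}\cong\CC^k$, exactly the injective homomorphism $\sigma$, the representation $r$ is faithful — and faithful unitary in case (ii) — since a representation of a Lie groupoid is faithful if and only if its restriction to every isotropy group is; this proves (i) and (ii).
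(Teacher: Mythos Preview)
Your proof is correct and follows essentially the same approach as the paper: choose a faithful (unitary) representation $\sigma:G\to\GL(k,\CC)$, form the associated flat bundle $\vV=M\times_H\CC^k$ via the homomorphism $\sigma\com q$, and let the arrow $[m,g]$ act on $\vV_{[m]}$ as $\sigma(g)$ in the frame determined by $m$. The paper states this construction more tersely, simply asserting that the inclusion $\iota:G\hookrightarrow\GL(m,\CC)$ induces the desired faithful representation on $M\times_H\CC^m$, whereas you spell out the well-definedness, local smoothness, and faithfulness checks explicitly; but the content is identical.
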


\begin{proof}\label{prop-simple-example-bundle}
(i)
Since the group $G$ is faithfully representable,
we may assume, without loss of generality, that $G$ is a Lie subgroup
of the Lie group  $\GL(m,\CC)$. Write $\iota:G\to\GL(m,\CC)$ for the inclusion.
The image of the homomorphism $\con\com q: H\to\Aut(G)$
lies inside the subgroup $\Inn(G)\subset\Aut(G)$ of inner automorphisms
of the group $G$.
$$
\xymatrix{
  & G \ar[d]^{\con} \ar[r]^-{\iota} & \GL(m,\CC) \\
H \ar[ru]^{q} \ar[r]_-{\con\com q} & \Inn(G) &
}
$$
We have the left action of $H$ on $\CC^m$ given
by the composition $\iota\com q:H\to\GL(m,\CC)$, and with
this action we construct
the vector bundle
$$ M\times_H \CC^m = (M\times\CC^m)/H $$
over the smooth manifold $M/H$.
The inclusion $\iota$ induces a faithful representation
of the Lie groupoid $M\times_H G$ on the vector bundle $ M\times_H \CC^m$.

(ii)
If $G$ is faithfully unitary representable,
we can replace the group $\GL(m,\CC)$ in the proof of (i)
with the unitary group $\U(m)$.
The left $H$-action on $\CC^m$ is now unitary
and there is a natural Hermitian metric on the vector bundle 
$M\times_H \CC^m$.
\end{proof}

In fact, in a special case we can show that the property
given in Theorem \ref{theo-representability} is not only necessary,
but also sufficient for the given
Lie groupoid to be faithfully representable:

\begin{theo}\label{theo-bundle-representability}
Let $M$ be a smooth connected
manifold with a proper free right action of a free discrete group $H$,
let $K$ be a compact Lie group
and let $\eta:H\to \Aut(K)$ be a homomorphism of groups.
Then the proper Lie groupoid $M\times_H K$
is faithfully unitary representable if, and only if, there exists a faithful character
$\chi$ of $K$ such that $\chi\com\eta(h)=\chi$ for all $h\in H$.
\end{theo}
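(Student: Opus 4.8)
The plan is to prove both implications, the necessary direction being essentially a direct application of Theorem \ref{theo-representability} and the sufficient direction requiring an explicit construction of a faithful unitary representation. For the necessity, suppose $\BB=M\times_H K$ is faithfully unitary (hence faithfully) representable. Pick a basepoint $x\in M$ and define $\rho\in\hom(K,\BB)$ by $\rho(k)=(x,k)H$, exactly as in Example \ref{ex-flat-bundle}. This $\rho$ is a monomorphism because the $H$-action on $M$ is free: if $(x,k)H=(x,1)H$ then $k=1$. Since here the fiber Lie group is $K$ itself (so the group $H'$ of Example \ref{ex-flat-bundle} is all of $H$), the path-lifting argument of Example \ref{ex-flat-bundle} gives $\eta(h)=\nu(h)\in\Mon(\BB;\rho)$ for every $h\in H$. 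Theorem \ref{theo-representability} then furnishes a faithful character $\chi$ of $K$ with $\chi\com\mu=\chi$ for all $\mu\in\Mon(\BB;\rho)$, in particular $\chi\com\eta(h)=\chi$ for all $h\in H$.

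For the sufficiency, assume a faithful character $\chi$ of $K$ is given with $\chi\com\eta(h)=\chi$ for all $h\in H$. The idea is to produce an honest faithful unitary representation of $K$ on which the whole group $H$ acts by a permutation/block structure compatible with $\eta$, and then flat-bundle it over $M/H$ as in Proposition \ref{prop-simple-example-bundle}. Since $\chi$ is a faithful character, the finite-dimensional representation $V_\chi$ affording $\chi$ is a faithful unitary representation of $K$; write $\pi:K\to\U(V_\chi)$. Because $\chi\com\eta(h)=\chi$, for each $h\in H$ the representations $\pi$ and $\pi\com\eta(h)$ of $K$ have the same character, hence are unitarily equivalent: there is $W_h\in\U(V_\chi)$ with $W_h\,\pi(k)\,W_h^{-1}=\pi(\eta(h)(k))$ for all $k\in K$, and $W_h$ is unique up to a scalar in $\U(1)$ (by Schur, since $V_\chi$ is irreducible). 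One checks that $h\mapsto W_h$ is a projective representation of $H$; here is where the hypothesis that \emph{$H$ is a free group} enters: a projective representation of a free group can be lifted to an ordinary representation, i.e.\ we may choose the $W_h$ so that $W_{h_1h_2}=W_{h_1}W_{h_2}$, because a free group has trivial second cohomology with coefficients in $\U(1)$ (equivalently, one simply chooses the images of a free generating set arbitrarily and extends). Thus $W:H\to\U(V_\chi)$ is a genuine unitary representation satisfying $W(h)\pi(k)W(h)^{-1}=\pi(\eta(h)(k))$.

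Now form the flat vector bundle $M\times_H V_\chi=(M\times V_\chi)/H$ over $M/H$, using the left $H$-action on $V_\chi$ given by $W$; this is well-defined and carries a natural Hermitian metric because $W$ is unitary and the $H$-action on $M$ is free and proper. Exactly as in the proof of Proposition \ref{prop-simple-example-bundle}, the representation $\pi$ of $K$ together with the intertwining relation $W(h)\pi(k)W(h)^{-1}=\pi(\eta(h)(k))$ assembles into a well-defined homomorphism of Lie groupoids $M\times_H K\to\U(M\times_H V_\chi)$ which is the identity on objects: on the arrow $(m,k)H$ one sends the vector $[m,v]\mapsto[m,\pi(k)v]$, and the relation makes this independent of the representative $(m,k)$. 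This representation is faithful on each isotropy group, since the isotropy group over $[m]\in M/H$ is identified with $K$ acting via $\pi$, which is faithful; hence by the fiberwise criterion for faithfulness recalled in the excerpt, the representation of $M\times_H K$ is faithful, and it is unitary by construction. (That $M\times_H K$ is a proper Lie groupoid is immediate, being a quotient of $M\times K$ by a free proper action, with compact isotropy $K$.)

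The main obstacle is the cocycle step in the sufficiency direction: passing from the collection of intertwiners $W_h$, which a priori only form a projective representation of $H$, to an honest linear representation. This is exactly the point at which freeness of $H$ is used and it cannot be dispensed with in general; I would phrase it cleanly by fixing a free generating set $S$ of $H$, choosing $W_s$ arbitrarily for $s\in S$ (each a unitary intertwining $\pi$ with $\pi\com\eta(s)$, available since the characters agree), and then \emph{defining} $W(h)$ for a reduced word $h=s_1^{\pm1}\cdots s_r^{\pm1}$ by the corresponding product; the intertwining relation for $W(h)$ follows by induction on word length from the relation for the generators, and multiplicativity $W(h_1h_2)=W(h_1)W(h_2)$ is automatic because $H$ is free on $S$, so there are no relations to verify.
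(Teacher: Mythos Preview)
Your proof is correct and follows essentially the same strategy as the paper: necessity via Theorem \ref{theo-representability} together with the inclusion $\eta(H)\subset\Mon(\BB;\rho)$ from Example \ref{ex-flat-bundle}, and sufficiency by choosing intertwiners on a free generating set of $H$ and assembling them into a flat Hermitian bundle. The paper packages the sufficiency step slightly differently---it passes through the normalizer $G=\N_{\U(m)}(r(K))$, lifts $\eta$ to a homomorphism $q:H\to G$ (using freeness of $H$ exactly as you do), and then invokes Proposition \ref{prop-simple-example-bundle} for the group $G$---but the content is the same.

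One slip worth flagging: you assert that $V_\chi$ is irreducible and invoke Schur's lemma to conclude that $W_h$ is unique up to a scalar, so that $h\mapsto W_h$ is a projective representation with cocycle in $\U(1)$. A faithful character of a compact Lie group is almost never irreducible (for instance, a torus of rank at least $2$ has no faithful irreducible representation), so this part of the argument is not correct as written. Fortunately your own ``clean'' rephrasing in the final paragraph does not use irreducibility at all: you simply pick an intertwiner $W_s$ for each free generator $s\in S$ and extend multiplicatively, checking the intertwining relation by induction on word length. That is exactly the right argument (and is what the paper does implicitly when it says ``since the group $H$ is free, we can choose a homomorphism $q:H\to G$ such that \ldots''), so just delete the irreducibility/projective-representation aside and keep the generator-based construction.
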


\begin{proof}
One implication is a direct consequence of Theorem \ref{theo-representability}.
To prove the other implication, assume that $\chi$
is a faithful character of $K$
such that $\chi\com\eta(h)=\chi$ for all $h\in H$, and choose
a faithful unitary
representation $r:K\to\U(m)$ with character $\chi$.
As a consequence, the unitary representations
$r\com\eta(h)$ and $r$ are unitary equivalent, which means that there exists
an element of the group $\U(m)$ which intertwines these two representations.
In particular,
this element lies of the normalizer $G=\N_{\U(m)}(r(K))$ 
of the subgroup $r(K)$ in the group $\U(m)$.
We have the inclusion $\iota:G\to\U(m)$ and the homomorphism of groups
$\nu:\Inn(G)\to\Aut(K)$ given by the composition of the restriction to $r(K)$
with the isomorphism $\Aut(K)\cong\Aut(r(K))$ induced by $r:K\to r(K)$.

Since the group $H$ is free, we can choose
a homomorphism of groups $q:H\to G$ such that
the diagram
$$
\xymatrix{
 & G \ar[d]^{\con} \ar[r]^{\iota} & \U(m) \\
H \ar[ru]^{q} \ar[r]^-{\con\com q} \ar[rd]_-{\eta} & \Inn(G) \ar[d]^{\nu} &  \\
 & \Aut(K) &
}
$$
commutes, or explicitly, such that for any $h\in H$ and any $k\in K$ we have
$$ r(\eta(h)(k)) = \con_{q(h)}(r(k)) = q(h) r(k) q(h)^{-1}. $$
By Proposition \ref{prop-simple-example-bundle},
we obtain the associated faithful unitary representation
$$r': M\times_H G \to \U(M\times_H \CC^m)$$
of the Lie groupoid $M\times_H G$.
The monomorphism $r:K\to G$ induces a monomorphism
of Lie groupoids $M\times_H K\to M\times_H G$, and the composition of this
monomorphism with the representation $r'$ is a faithful unitary representation
of the Lie groupoid $M\times_H K$.
\end{proof}

\section{Monodromy and proper Lie groupoids}

In this section we shall study the monodromy groups of proper Lie groupoids.
Recall that any proper Lie groupoid $\GG$
is locally faithfully representable,
so by Proposition \ref{prop-monomorphism-on-component}
all homomorphisms in $\hom(K,\GG;\rho)$ are monomorphisms,
for any compact Lie group $K$
and any monomorphism $\rho\in\hom(K,\GG)$. In particular,
the monodromy group $\Mon(\GG;\rho)$ acts
freely on $\hom(K,\GG;\rho)$.

\subsection{Compact Lie groups}\label{subsec-compact-Lie-groups}\label{subsec-Lie-groups}
Let $K$ and $G$ be compact Lie groups and let
$\sigma:K\to G$ be a homomorphism of Lie groups. Recall that the space
$\hom(K,G)$ 
is Hausdorff and that the $G$-conjugation orbits
are open in $\hom(K,G)$ \cite{ConnerFloyd,MontgomeryZippin}.
This implies that the space $\hom(K,G;\sigma)$ is precisely
the $G$-conjugation orbit through $\sigma$,
$$ \hom(K,G;\sigma)=G\sigma=\Inn(G)\com\sigma, $$
which is homeomorphic to the manifold
$G/\C_{G}(\sigma(K))$ of left cosets of the centralizer $\C_{G}(\sigma(K))$
of the subgroup $\sigma(K)$ in $G$. In particular, the space
$\hom(K,G;\sigma)$ has a structure of a compact smooth manifold.

Assume now that $\sigma$ is a monomorphism.
For an automorphism $\mu\in\Aut(K)$ we have $G\sigma\com\mu=G\sigma$ if,
and only if, there exists $g\in G$ such that
$\sigma\com\mu=\con_g\com\sigma$. If this is the case, then $g$ lies in
the normalizer $\N_{G}(\sigma(K))$ of the subgroup
$\sigma(K)$ in $G$. 
It follows that we have a natural epimorphism
$$ \N_{G}(\sigma(K)) \to \Mon(G;\sigma) $$
which maps an element $g\in\N_{G}(\sigma(K))$
to the automorphism $\sigma^{-1}\com \con_g|_{\sigma(K)}\com \sigma$
of the Lie group $K$.
The kernel of this epimorphism is exactly the centralizer
$\C_{G}(\sigma(K))$ of the subgroup $\sigma(K)$ in $G$.
Thus, we have constructed a natural isomorphism of groups
$$ \Mon(G;\sigma) \cong \N_{G}(\sigma(K))/\C_{G}(\sigma(K)).$$

If we choose $K=G$ and $\rho=\id$, we have
$$ \hom(G,G;\id)=G\id=\Inn(G) \cong G/\C_G(G)$$
and also
$$ \Mon(G;\id)=\Inn(G) \cong G/\C_G(G),$$
where $\C_{G}(G)$ is the center of the Lie group $G$.

\subsection{Actions of compact Lie groups}
Let $G$ be a compact Lie group and let
$M$ be a smooth manifold with a smooth left $G$-action.
We write
$G_x$ for the isotropy group of the $G$-action at a point $x\in M$.
For a subgroup $H$ of $G$, we denote by
$$ \fix_H(M) = M^{H}=\{ y\in M \,|\, H\subset G_y \} \subset M $$
the fixed point space of the subgroup $H\subset G$ in $M$, and by 
$$ \fix_H(M;x) $$
the path-component of the space $\fix_H(M)$ which includes
a point $x\in\fix_H(M)$. The fixed point space $\fix_H(M)$
is a closed subspace of $M$. If the subgroup $H$ is compact,
the Bochner linearization theorem \cite{DuistermaatKolk} implies that
the fixed point space $\fix_H(M)$ is locally a manifold,
so $\fix_H(M)$ is locally path-connected and
each path-component of $\fix_H(M)$ is a closed embedded submanifold
of $M$. However, different path-components of $\fix_H(M)$
may have different dimensions. 

We consider the action groupoid $G\ltimes M$, which is a proper
Lie groupoid over the manifold $M$.
Let $K$ be a compact Lie group. Note that the conjugation
action of the Lie groupoid $G\ltimes M$ on the space
$\hom(K,G\ltimes M)$ along $\omega:\hom(K,G\ltimes M)\to M$
is in this case simply given by the conjugation action
of the Lie group $G$ on the space $\hom(K,G\ltimes M)$, and that
the map $\omega$ is $G$-equivariant.
Any homomorphism $\varphi\in\hom(K,G\ltimes M)$
is of the form $\varphi(k)=(\vartheta(k),x)$, for a point $x\in M$ and
a homomorphism of Lie groups $\vartheta:K\to G_x$.
Conversely,
for any homomorphism $\sigma:K\to G$ of Lie groups and
any point $y\in\fix_{\sigma(K)}(M)$ there is a homomorphism
$$ \bar{\sigma}(y)\in\hom(K,G\ltimes M) $$
defined by $\bar{\sigma}(y)(k)=(\sigma(k),y)$, and this
gives us the partially defined continuous section
$$ \bar{\sigma}: \fix_{\sigma(K)}(M)\to\hom(K,G\ltimes M) $$
of the map $\omega:\hom(K,G\ltimes M)\to M$.

For any $x\in M$
we have the sequence of homomorphisms of Lie groupoids
$$ G_x \to  G\ltimes M \to G, $$
where the first homomorphism $\iota_x:G_x\to G\ltimes M$
is given by inclusion of the
isotropy group $G_x$ at the point $x$, $\iota_x(g)=(g,x)$,
while the second homomorphism is
the projection $\pr_G:G\ltimes M\to G$. These homomorphisms induce the sequence
of inclusions
$$ \Mon(G_x;\sigma)
   \subset
   \Mon(G\ltimes M;\bar{\sigma}(x))
   \subset 
   \Mon(G;\sigma) ,$$
for any homomorphism of Lie groups $\sigma:K\to G_x$.

\begin{ex}\rm\label{ex-action-with-fixed-point}
Let $K$ and $G$ be compact Lie groups, let $M$ be a smooth manifold with
a smooth left $G$-action, and let 
$\sigma:K\to G$ be a homomorphism of Lie groups.
It the $G$-action on $M$ has a fixed point $x\in M$, then
$\Mon(G\ltimes M;\bar{\sigma}(x)) = \Mon(G;\sigma).$
\end{ex}

Choose a homomorphism of Lie groups
$\sigma:K\to G$. 
Since the projection $\pr_G:G\ltimes M\to G$ is a homomorphism
of Lie groupoids, the induced continuous map
$$ \hom(K,\pr_G):\hom(K,G\ltimes M)\to\hom(K,G),\;\;\;\;\;\;\;\;
   \varphi\mapsto\pr_G\com\varphi, $$
is $G$-equivariant.
The orbit $G\sigma$ is $G$-invariant,
open and closed
in $\hom(K,G)$, thus the inverse image
$\hom(K,\pr_G)^{-1}(G\sigma)$ is $G$-saturated,
open and closed in the space $\hom(K,G\ltimes M)$.
Observe that we have
$$ \hom(K,\pr_G)^{-1}(\{\sigma\}) = \bar{\sigma}(\fix_{\sigma(K)}(M))
   \subset \hom(K,\pr_G)^{-1}(G\sigma).$$
We define a map
$$ \hat{\sigma}: G\times \fix_{\sigma(K)}(M) \to \hom(K,\pr_G)^{-1}(G\sigma) $$
by
$$ \hat{\sigma}(g,y)(k)=(\con_g(\sigma(k)),gy).$$
It is clear that this map is continuous, surjective and $G$-equivariant
with respect to the left $G$-action on $G\times \fix_{\sigma(K)}(M)$
given by $g'(g,y)=(g'g,y)$.
The centralizer
$$ H=\C_{G}(\sigma(K)) $$
is a compact subgroup of the normalizer
$\N_{G}(\sigma(K))$ and acts by multiplication from the right
on $G$, but also from the left on $\fix_{\sigma(K)}(M)$ because
$\N_{G}(\sigma(K)) \fix_{\sigma(K)}(M) = \fix_{\sigma(K)}(M)$.
We combine these two actions to get a smooth
right diagonal $H$-action on the space $G\times \fix_{\sigma(K)}(M)$.
One may check that the map $\hat{\sigma}$ factors through the quotient projection
$G\times \fix_{\sigma(K)}(M) \to G\times_H \fix_{\sigma(K)}(M)
=\big(G\times \fix_{\sigma(K)}(M)\big)/H$ as 
a continuous surjective $G$-equivariant map
$$ \tilde{\sigma}:G\times_H\fix_{\sigma(K)}(M) \to \hom(K,\pr_G)^{-1}(G\sigma).$$
It is straightforward to check that $\tilde{\sigma}$ is a bijection.
We claim:

\begin{lem}\label{lem-homeomorphism-hom}
For any compact Lie group $G$, any
smooth manifold $M$ with a smooth left $G$-action and
any homomorphism $\sigma:K\to G$ of Lie groups,
the map $\tilde{\sigma}$
is a $G$-equivariant homeomorphism.
\end{lem}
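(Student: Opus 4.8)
The map $\tilde{\sigma}$ is already known to be a continuous, surjective, $G$-equivariant bijection, so the only point left to prove is continuity of its inverse. The plan is to regard $\tilde{\sigma}$ as a map fibered over $G\sigma$ via $\hom(K,\pr_G)$, to construct continuous local inverses over a suitable open cover of $G\sigma$, and then to glue them. The structure we exploit is that $G$ acts transitively by conjugation on $\hom(K,G;\sigma)=G\sigma$ with stabilizer $H=\C_G(\sigma(K))$ at $\sigma$, so the orbit map $G\to G\sigma$, $g\mapsto\con_g\com\sigma$, exhibits $G\sigma$ as $G/H$ and, $G$ and $H$ being compact Lie groups, admits continuous local sections.

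Fix an open set $U\subset G\sigma$ together with a continuous section $s:U\to G$ of the orbit map, so $\con_{s(\psi)}\com\sigma=\psi$ for all $\psi\in U$. For $\varphi\in\hom(K,\pr_G)^{-1}(U)$ write $\vartheta=\pr_G\com\varphi\in U$ and $g_\varphi=s(\vartheta)$, so that $\con_{g_\varphi}\com\sigma=\vartheta$. Since $\varphi$ is a homomorphism into $\I_{\omega(\varphi)}(G\ltimes M)=G_{\omega(\varphi)}$ and $\varphi(k)=(\vartheta(k),\omega(\varphi))$, the point $\omega(\varphi)$ is fixed by $\vartheta(K)$, that is, $\omega(\varphi)\in\fix_{\vartheta(K)}(M)=\fix_{\con_{g_\varphi}(\sigma(K))}(M)=g_\varphi\,\fix_{\sigma(K)}(M)$, whence $g_\varphi^{-1}\omega(\varphi)\in\fix_{\sigma(K)}(M)$. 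This lets us define
$$ \lambda_U:\hom(K,\pr_G)^{-1}(U)\to G\times_H\fix_{\sigma(K)}(M),\qquad
   \lambda_U(\varphi)=\big[\,g_\varphi,\ g_\varphi^{-1}\omega(\varphi)\,\big]. $$
Continuity of $\lambda_U$ is immediate: $\varphi\mapsto g_\varphi$ is the composition of the continuous maps $\hom(K,\pr_G)$ and $s$, the map $\omega$ is continuous, inversion in $G$ and the $G$-action on $M$ are continuous, and the quotient projection $G\times\fix_{\sigma(K)}(M)\to G\times_H\fix_{\sigma(K)}(M)$ is continuous.

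It then remains to note that $\lambda_U$ inverts $\tilde{\sigma}$ over $U$: the identity $\tilde{\sigma}(\lambda_U(\varphi))=\varphi$ is immediate from $\con_{g_\varphi}\com\sigma=\vartheta=\pr_G\com\varphi$, and injectivity of $\tilde{\sigma}$ gives the other composite. By the same injectivity the maps $\lambda_U$ coincide on overlaps, so they glue to a continuous map $\lambda:\hom(K,\pr_G)^{-1}(G\sigma)\to G\times_H\fix_{\sigma(K)}(M)$ with $\tilde{\sigma}\com\lambda=\id$; as $\tilde{\sigma}$ is bijective, $\lambda=\tilde{\sigma}^{-1}$, which is therefore continuous, and $G$-equivariance has already been recorded. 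I do not expect a genuine obstacle: the two points needing a little care are the existence of $G$-valued local sections over $G\sigma$, which is where compactness of $H$ enters, and the identity $\fix_{\con_g(\sigma(K))}(M)=g\,\fix_{\sigma(K)}(M)$ used to keep $\lambda_U$ inside the fixed-point set. The possible failure of $\fix_{\sigma(K)}(M)$ to be a manifold globally is irrelevant here, since it enters only as a topological $H$-space.
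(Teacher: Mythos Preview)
Your argument is correct, but it follows a different route from the paper's. The paper does not construct the inverse at all: instead it shows that the lift $\hat{\sigma}:G\times\fix_{\sigma(K)}(M)\to\hom(K,\pr_G)^{-1}(G\sigma)$ is a closed map, whence $\tilde{\sigma}$ is closed and thus a homeomorphism. The closedness of $\hat{\sigma}$ is obtained by factoring it through the injective continuous map $(\hom(K,\pr_G),\omega)$ into $\hom(K,G)\times M$ and observing that the resulting map $\beta(g,y)=(\con_g\com\sigma,gy)$ is closed, being the restriction to a closed subspace of the composite of a diffeomorphism with $f\times\id_M$, where $f:G\to\hom(K,G)$, $g\mapsto\con_g\com\sigma$, is compact-to-Hausdorff.

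What each approach buys: the paper's closed-map argument is shorter and uses compactness of $G$ in one stroke, without ever invoking the bundle structure of $G\to G/H$ or the identification of $G\sigma$ with $G/H$. Your approach, by contrast, makes the inverse explicit and exhibits $\tilde{\sigma}$ as a map of spaces fibered over $G\sigma$, which is conceptually informative and would generalize to situations where the source is not compact but the orbit map $G\to G\sigma$ still admits local sections. One small remark: where you say ``compactness of $H$ enters'' for the local sections, what is actually used is that $H$ is a closed subgroup of the Lie group $G$ (so $G\to G/H$ is a principal bundle) together with the compact-to-Hausdorff identification $G/H\cong G\sigma$; compactness of $G$ is doing the work there rather than compactness of $H$ per se.
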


\begin{proof}
We have already seen that the map $\tilde{\sigma}$
is a $G$-equivariant continuous bijection. We will show 
that the map $\hat{\sigma}$ is closed, which implies that $\tilde{\sigma}$ is closed
and hence a homeomorphism.
To this end, observe first that the map
$f:G\to\hom(K,G)$, $g\mapsto \con_g\com\sigma$, is a continuous map from a compact space
to a Hausdorff space, which yields that
the map $f\times\id_M:G\times M\to \hom(K,G)\times M$ is closed.
Since the map $G\times M\to G\times M$, $(g,y)\mapsto (g,gy)$, is a diffeomorphism,
the composition of $f\times\id_M$ with this diffeomorphism is also closed.
In particular, the restriction of this composition to the closed subspace
$G\times \fix_{\sigma(K)}(M)$ of $G\times M$ is a closed map
$\beta:G\times \fix_{\sigma(K)}(M)\to \hom(K,G)\times M$, given by
$\beta(g,y)=(\con_g\com\sigma,gy)$.
Now observe that we have a commutative diagram of continuous maps
$$
\xymatrix{
G\times \fix_{\sigma(K)}(M) \ar[r]^\beta \ar[dr]_{\hat{\sigma}} & \hom(K,G)\times M \\
& \hom(K,\pr_G)^{-1}(G\sigma) \ar[u]_{(\hom(K,\pr_G),\omega)}
}
$$
in which $\beta$ is closed and $(\hom(K,\pr_G),\omega)$ is injective.
It follows that $\hat{\sigma}$ is closed.
\end{proof}

\begin{prop}\label{prop-holonomy-of-action}
Let $K$ and $G$ be compact Lie groups, let
$M$ be a smooth manifold with a smooth left $G$-action,
let $x$ be a point in $M$ and let $\sigma:K\to G_x$ be a homomorphism of Lie groups.
Denote $H=\C_{G}(\sigma(K))$.

(i)
The space $\hom(K,G\ltimes M;\bar{\sigma}(x))$ is an open,
closed and $G$-saturated
subset of $\hom(K,G\ltimes M)$, and we have a $G$-equivariant homeomorphism
$$ G \times_{H} \big( \sat_H (\fix_{\sigma(K)}(M;x)) \big)
   \to \hom(K,G\ltimes M;\bar{\sigma}(x)) $$
given by $(g,y)H\mapsto (k\mapsto (\con_g(\sigma(k)),gy))$.
In particular, we have a natural structure of a smooth manifold
on the space $\hom(K,G\ltimes M;\bar{\sigma}(x))$.

(ii)
We have
$$
\hom(K,G\ltimes M;\bar{\sigma}(x))
= \sat_G \big( \bar{\sigma} ( \fix_{\sigma(K)}(M;x) ) \big) 
$$
and
$$ \Sigma(G\ltimes M;\bar{\sigma}(x)) = \sat_G \big( \fix_{\sigma(K)}(M;x) \big).$$
\end{prop}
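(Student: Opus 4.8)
The plan is to derive Proposition \ref{prop-holonomy-of-action} from Lemma \ref{lem-homeomorphism-hom} by restricting the global homeomorphism $\tilde{\sigma}$ to the appropriate path-component. First I would recall that $\hom(K,\pr_G)^{-1}(G\sigma)$ is open, closed and $G$-saturated in $\hom(K,G\ltimes M)$ (already established before Lemma \ref{lem-homeomorphism-hom}), so it is a union of path-components of $\hom(K,G\ltimes M)$; consequently $\hom(K,G\ltimes M;\bar{\sigma}(x))$, being the $G$-saturated path-component through $\bar\sigma(x)$, is contained in it and is an open, closed, $G$-saturated subset there. Transporting along the homeomorphism $\tilde\sigma$ of Lemma \ref{lem-homeomorphism-hom}, it suffices to identify which $G$-saturated union of path-components of $G\times_H\fix_{\sigma(K)}(M)$ corresponds to $\hom(K,G\ltimes M;\bar\sigma(x))$. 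Under $\tilde\sigma$ the point $\bar\sigma(x)$ is the image of $(1_G,x)H$.

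Next I would analyze the path-components of $Y=G\times_H\fix_{\sigma(K)}(M)$. Since $G$ is a compact Lie group it is path-connected or at least its identity component acts transitively on $G/G^0$; more usefully, the projection $Y\to H\backslash\fix_{\sigma(K)}(M)$ is a fiber bundle with fiber $G/($something$)$, but the cleanest route is: the $G$-orbit of $(1_G,y)H$ in $Y$ is all of $G\times_H\{Hy\}$, i.e.\ depends only on the $H$-orbit $Hy$. Hence a $G$-saturated subset of $Y$ is the same as an $H$-saturated subset of $\fix_{\sigma(K)}(M)$ (pulled back), and the minimal $G$-saturated union of path-components of $Y$ containing $(1_G,x)H$ corresponds to the minimal $H$-saturated union of path-components of $\fix_{\sigma(K)}(M)$ containing $x$. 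Using that $\fix_{\sigma(K)}(M)$ is locally path-connected (Bochner linearization, as recalled in the text) together with the fact that $H$ acts on it by homeomorphisms permuting its path-components, this minimal set is exactly $\sat_H(\fix_{\sigma(K)}(M;x))$: it is $H$-saturated and a union of path-components because $H$ maps the path-component $\fix_{\sigma(K)}(M;x)$ onto path-components, and it is minimal because any $H$-saturated set containing $x$ must contain $\fix_{\sigma(K)}(M;x)$ and hence its whole $H$-saturation. Care is needed here: one must check that when we pass through $\tilde\sigma$, path-components really do match up, i.e.\ that $G\times_H(\sat_H\fix_{\sigma(K)}(M;x))$ is genuinely a union of path-components of $Y$ and is path-connected modulo the $G$-action; this follows since $G$ is path-connected-by-components in a way compatible with $H$, and since the path-component structure of $Y$ is detected on the base after quotienting by the connected acting group together with $H$. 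This identification, combined with Lemma \ref{lem-homeomorphism-hom}, gives the $G$-equivariant homeomorphism of part (i), and the smooth manifold structure is inherited because $\sat_H(\fix_{\sigma(K)}(M;x))$ is an open subset of $\fix_{\sigma(K)}(M)$ (being a union of the full-dimensional path-component $\fix_{\sigma(K)}(M;x)$ and its $H$-translates, all embedded submanifolds of the same dimension), so $G\times_H(\sat_H\fix_{\sigma(K)}(M;x))$ is a smooth manifold.

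For part (ii), I would read off both equalities directly from the homeomorphism in (i). Restricting the defining formula $\tilde\sigma(g,y)H=(k\mapsto(\con_g(\sigma(k)),gy))$ to $g=1_G$ identifies $\bar\sigma(\fix_{\sigma(K)}(M;x))$ with the image of $\{1_G\}\times\fix_{\sigma(K)}(M;x)$ inside $\hom(K,G\ltimes M;\bar\sigma(x))$, and taking $G$-saturations on both sides (the homeomorphism is $G$-equivariant) yields $\hom(K,G\ltimes M;\bar\sigma(x))=\sat_G(\bar\sigma(\fix_{\sigma(K)}(M;x)))$, since $G\cdot(\{1_G\}\times_H\sat_H\fix_{\sigma(K)}(M;x))$ is all of $G\times_H\sat_H\fix_{\sigma(K)}(M;x)$. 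Finally, applying $\omega$ and using $\omega(\bar\sigma(y))=y$ together with $\omega(g\varphi)=g\,\omega(\varphi)$ gives
$$\Sigma(G\ltimes M;\bar\sigma(x))=\omega\big(\sat_G(\bar\sigma(\fix_{\sigma(K)}(M;x)))\big)=\sat_G\big(\fix_{\sigma(K)}(M;x)\big),$$
where the last equality holds because $\sat_G\circ\bar\sigma$ followed by $\omega$ collapses to $\sat_G$ on the base, $H$-saturation being absorbed into $G$-saturation. The main obstacle I anticipate is the bookkeeping in part (i): rigorously matching path-components across the balanced-product homeomorphism $\tilde\sigma$ — in particular verifying that $\sat_H(\fix_{\sigma(K)}(M;x))$ is precisely the base set that produces the path-component through $\bar\sigma(x)$, and not something larger or smaller — which requires carefully combining local path-connectedness of the fixed-point set, the fact that $H$ permutes its path-components, and path-connectedness properties of the compact Lie group $G$ acting on $Y$.
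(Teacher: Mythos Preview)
Your proposal is correct and follows essentially the same route as the paper: restrict the $G$-equivariant homeomorphism $\tilde\sigma$ of Lemma~\ref{lem-homeomorphism-hom} to identify $\hom(K,G\ltimes M;\bar\sigma(x))$ with $G\times_H\sat_H(\fix_{\sigma(K)}(M;x))$, and then read off (ii) from (i) using $G$-equivariance of $\omega$. One remark: your concern about path-connectedness of $G$ is a red herring --- since $\fix_{\sigma(K)}(M)$ is locally path-connected, $\sat_H(\fix_{\sigma(K)}(M;x))$ is open and closed in it, hence $G\times_H\sat_H(\fix_{\sigma(K)}(M;x))$ is open, closed and $G$-saturated in $G\times_H\fix_{\sigma(K)}(M)$, and minimality follows because any $G$-saturated open-and-closed set corresponds to an $H$-saturated open-and-closed subset of $\fix_{\sigma(K)}(M)$ containing $x$; no connectedness of $G$ is needed.
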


\begin{proof}
(i)
The smooth manifold $\fix_{\sigma(K)}(M;x)$ is a path-component of the space
$\fix_{\sigma(K)}(M)$ which is locally a manifold, so
the saturation $\sat_H (\fix_{\sigma(K)}(M;x))$ is also a smooth
manifold and a union of path-components of the space $\fix_{\sigma(K)}(M)$.
It follows that the product $G\times \sat_H (\fix_{\sigma(K)}(M;x))$
is a smooth manifold which is $H$-saturated,
and since $H$ is a compact Lie group acting smoothly and freely on
$G\times \sat_H (\fix_{\sigma(K)}(M;x))$, the space of orbits
$G\times_H \sat_H (\fix_{\sigma(K)}(M;x))$ is a smooth manifold as well.
Furthermore, the subset $G\times_H \sat_H (\fix_{\sigma(K)}(M;x))$
is clearly the minimal union of path-components of the space
$G\times_H \fix_{\sigma(K)}(M)$ which is $G$-saturated
and includes the point $(1,x)H$.
The $G$-equivariant map $\tilde{\sigma}$,
which is a homeomorphism by Lemma \ref{lem-homeomorphism-hom},
therefore maps $G \times_{H} \sat_H (\fix_{\sigma(K)}(M;x))$ onto
the space $\hom(K,G\ltimes M;\bar{\sigma}(x))$.

(ii)
First equality is a direct consequence of part (i),
while the second follows from the first because the map
$\omega:\hom(K,G\ltimes M)\to M$ is $G$-equivariant.
\end{proof}

\begin{ex}\rm\label{ex-linear-action}
Let $G$ be a compact Lie group with an orthogonal representation
$G\to \Ort(n)$. This means that we have an orthogonal left action
of $G$ on the vector space $\RR^n$.
Let $M$ be an open ball in $\RR^n$ centered at the origin $0\in \RR^n$.
Since $M$ is $G$-saturated, the action of $G$ on $\RR^n$ restricts to
an action on $M$.

Let $K$ be a compact Lie group and let $\sigma:K\to G$ be a monomorphism.
The origin $0$ is a fixed point of the $G$-action, so we have
$$ \Mon(G\ltimes M;\bar{\sigma}(0)) = \Mon(G;\sigma)
   \cong \N_{G}(\sigma(K))/\C_{G}(\sigma(K)),$$
by Example \ref{ex-action-with-fixed-point} and Subsection \ref{subsec-Lie-groups}.
The fixed point set
$\fix_{\sigma(K)}(\RR^n)$ is a linear subspace of $\RR^n$, so the
fixed point space
$\fix_{\sigma(K)}(M)=\fix_{\sigma(K)}(\RR^n)\cap M$
is an open ball in the space
$\fix_{\sigma(K)}(\RR^n)$. In particular we have
$$ \fix_{\sigma(K)}(M;0)=\fix_{\sigma(K)}(M)=\fix_{\sigma(K)}(\RR^n)\cap M.$$
It follows that
$$ \hom(K,G\ltimes M;\bar{\sigma}(0))
 = \sat_G \big( \bar{\sigma} ( \fix_{\sigma(K)}(\RR^n)\cap M ) \big)
$$
and
$$ \Sigma(G\ltimes M;\bar{\sigma}(0)) = 
   \sat_G \big( \fix_{\sigma(K)}(\RR^n)\cap M \big) $$
by Proposition \ref{prop-holonomy-of-action}.
\end{ex}

\subsection{Proper groupoids}
Let $\GG$ be a proper Lie groupoid. 
Recall that such a groupoid is locally Morita equivalent to the action Lie groupoid
of an action of a compact Lie group. This is a consequence of
the Weinstein-Zung linearization theorem for proper Lie groupoids
\cite{CrainicStruchiner,Weinstein,Zung}.

In fact, for any point $x\in\GG_0$ we can choose a slice $S$ through $x$,
which is an embedded submanifold of $\GG_0$ of dimension
$m=\dim(\GG_0) -\dim(\GG x)$ such that
$S\cap \GG x =\{x\}$ and so that for any $y\in S$ the orbit
$\GG y$ is transversal to $S$ at $y$.
This implies that the restriction $\GG|_S$ is a Lie groupoid, that
$U=\sat_\GG(S)$ is an open subset of $\GG_0$
and that the embedding $\GG|_S \to \GG|_U$ is a weak equivalence. 
Furthermore, by linearization theorem
we can choose $S$ so that
there exist 
an orthogonal action of the isotropy group $G=\I_x(\GG)$ on $\RR^m$,
and open ball $M\subset\RR^m$ centered at the origin
and an isomorphism of Lie groupoids
$G\ltimes M \to \GG|_S$
which maps the origin $0\in M$ to the point $x$ and which is the projection
when restricted to the isotropy group
$\I_x(G\ltimes M)=G\times\{0\}$.
If we compose this isomorphism with the inclusion $\GG|_S \to \GG|_U$, we obtain
a weak equivalence
$$ \lambda:G\ltimes M \to \GG|_U .$$

Let $K$ be a compact Lie group, let $\sigma:K\to G$
be a monomorphism of Lie groups and let
$\rho=\bar{\sigma}(0)\in\hom(K,G\ltimes M)$. 
Note that the composition
$\lambda\com\rho\in\hom(K,\GG|_U)$  is also a monomorphism.
By Example \ref{ex-linear-action} and
Proposition \ref{prop-restriction-sigma-along-weak-equivalence}
we therefore conclude that
$$ \hom(K,\GG|_U;\lambda\com\rho)
= \sat_\GG \big( \lambda \com 
  \big( \bar{\sigma} ( \fix_{\sigma(K)}(\RR^n)\cap M ) \big) \big) $$
and
$$ \Sigma(\GG|_U;\lambda\com\rho)
= \sat_\GG \big( \lambda \big( \fix_{\sigma(K)}(\RR^n)\cap M \big) \big), $$
while
$$ \Mon(\GG|_U;\lambda\com\rho)
= \Mon(G;\sigma) \cong \N_{G}(\sigma(K)) / \C_{G}(\sigma(K)). $$

\subsection{Proper regular groupoids}
Recall that a Lie groupoid is regular if all its isotropy groups
have the same dimension.
For more details and a discussion on classification of such Lie groupoids,
see \cite{Moerdijk2003}.

Let $\GG$ be a path connected regular
proper Lie groupoid.
The orbits of the left $\GG$-action on $\GG_0$
are leafs of a regular foliation of the manifold
$\GG_0$ and we have the induced representation of $\GG$
on the normal bundle $\N\GG$ of this foliation. The kernel of this
representation is a locally trivial bundle $\K(\GG)$ of compact Lie groups over $\GG_0$,
a subbundle of the isotropy bundle $\I(\GG)$.
The fibers of the bundle $\K(\GG)$ are called the ineffective isotropy groups of $\GG$.

Choose a point $x\in \GG_0$, put $K=\K(\GG)_x$ and let
$\rho:K=\K(\GG)_x\to\GG$ be the inclusion.
By \cite[Proposition 11]{JelencMrcun} it follows that
$$ \hom(K,\GG;\rho) = \hom(K,\K(\GG);\rho) $$
and any element of $\hom(K,\GG;\rho)$
is an isomorphism onto an ineffective isotropy group.
In particular, the
map $\omega:\hom(K,\GG;\rho)\to\GG_0$ is
a locally trivial bundle and
$$ \Sigma(\GG;\rho)=\GG_0.$$
The right $\Mon(\GG;\rho)$-action on $\hom(K,\GG;\rho)$ is free,
but also transitive along the fibers of $\omega:\hom(K,\GG;\rho)\to\GG_0$,
by Corollary \ref{cor-characterization-of-monodromy-cosets},
and the map
$$ \omega:\hom(K,\GG;\rho)\to\GG_0 $$
is a $\GG$-equivariant principal $\Mon(\GG;\rho)$-bundle over $\GG_0$.
The associated functor
$$ \Pc(\GG^K;\rho)=\GG\ltimes \hom(K,\GG;\rho) \to \GG $$
is a Serre fibration of Lie groupoids \cite{JelencMrcun}
and hence there is a natural long exact sequence \cite[Theorem 12]{JelencMrcun}
of homotopy groups:
\begin{equation*}
\begin{split}
\ldots\to\pi_n(\Mon(\GG;\rho))\to\pi_n(\Pc(\GG^K;\rho))
\stackrel{\pi_{n}(\omega)}{\to}&\pi_n(\GG)
\stackrel{\partial}{\to}\pi_{n-1}(\Mon(\GG;\rho))\to\ldots \\
\ldots\to\pi_1(\Mon(\GG;\rho))\to\pi_1(\Pc(\GG^K;\rho))
\stackrel{\pi_{1}(\omega)}{\to}&\pi_1(\GG)
\stackrel{\partial}{\to}\pi_{0}(\Mon(\GG;\rho))
\end{split}
\end{equation*}
Note that for any $n\geq 1$ we have
$$ \pi_n(\Pc(\GG^K;\rho))=\pi_n(\GG\ltimes \hom(K,\GG;\rho))=\pi_n(\GG^K;\rho) $$
and
$$ \pi_n(\Mon(\GG;\rho))=\pi_n(\Aut(K))=\pi_n(\Inn(K)) $$
by Proposition \ref{prop-monodromy-second-properties}.
Furthermore, the homomorphism of groups
$$ \partial:
\pi_1(\GG) \to \pi_{0}(\Mon(\GG;\rho)), $$
which we call the monodromy map, is surjective.

\begin{ex}\rm
Let $M$ be a connected smooth manifold with a free proper
right action of a discrete group $H$, let $G$ be a compact Lie group
and let $\eta:H\to\Aut(G)$ be a homomorphism of groups.
As in Example \ref{ex-flat-bundle} we have the associated
quotient Lie groupoid
$$ \BB = M\times_H G $$
which is in this case a regular proper Lie groupoid, in fact a locally trivial
bundle of Lie groups. Furthermore, we have
$\K(\BB)=\I(\BB)=\BB$. 
We have a natural isomorphism
$$ \hom(G,\BB)\cong M\times_H \hom(G,G) $$
of Lie groupoids over $M/H$,
where the left action of $H$ on $\hom(G,G)$ is given by $\eta$ and the composition,
that is, for any $h\in H$ and any $\varphi\in\hom(G,G)$ we have
$h\varphi=\eta(h)\com\varphi$.
Let $x$ be a point of $M$ and let $\rho\in\hom(G,\BB)$ be the monomorphism
given by $\rho(g)=(x,g)H$. Under the isomorphism
$\hom(G,\BB)\cong M\times_H \hom(G,G)$, the monomorphism $\rho$
corresponds to the $H$-orbit of the pair $(x,\id_G)$.

For any $\varphi\in\hom(G,G)$,
the set $G\varphi=\Inn(G)\com\varphi$ is open, closed and $G$-saturated in $\hom(G,G)$.
Note that $\eta(H)\com\Inn(G)=\Inn(G)\com\eta(H)$ because
the subgroup $\Inn(G)$ is normal in $\Aut(G)$. In particular,
the product of the subgroups $\eta(H)$ and $\Inn(G)$ in $\Aut(G)$
is a subgroup
$$ L=\eta(H)\com\Inn(G) $$
of the group $\Aut(G)$. It follows that
the set $L\com\varphi$ is open and closed $\hom(G,G)$, and is also
the minimal subset of $\hom(G,G)$ which
includes $\varphi$ and
is both $G$-saturated and $H$-saturated.

This implies that the subset
$$ M\times_H L \subset  M\times_H \hom(G,G) $$
is open, closed and $\BB$-saturated, and that
the natural isomorphism $\hom(G,\BB)\cong M\times_H \hom(G,G)$
restricts to the isomorphism
$$ \hom(G,\BB;\rho)) \cong M\times_H L. $$
In particular, this implies 
that the monodromy group $\Mon(\BB;\rho)$ is the product
of the subgroups $\eta(H)$ and $\Inn(G)$ in $\Aut(G)$,
$$ \Mon(\BB;\rho) = \eta(H)\com \Inn(G).$$
\end{ex}

\end{document}